\DeclareMathOperator{\cl}{cl}
\DeclareMathOperator{\ident}{id}
\DeclareMathOperator{\convhull}{co}
\newcommand\F{Fr\'echet}
\newcommand\RKHS{{\mathcal{H}}}
\newtheorem{theorem}{Theorem}[section]
\newtheorem{proposition}[theorem]{Proposition}
\newtheorem{lemma}[theorem]{Lemma}
\newtheorem{definition}[theorem]{Definition}
\newtheorem{notation}[theorem]{Notation}
\newtheorem*{question}{Question}
\theoremstyle{remark}
\newtheorem{remark}[theorem]{Remark}
\title{Banach Intermediate Spaces for Gaussian Fr\'{e}chet Spaces}
\author{Yifei Zheng}
\address{School of Mechanical Engineering, Purdue University, West Lafayette, IN, 47907,USA}
\email{zheng432@purdue.edu}
\author{Zachary Selk*}
\address{Department of Mathematics, Purdue University, West Lafayette, IN, 47907, USA, }
\email{\it{Corresponding author}, zselk@purdue.edu}
\subjclass{60B11, 46G12, 28C20}
\date{\today}
\keywords{Gaussian measure, Fr\'echet space, Cameron-Martin space, Brownian motion.
}
\begin{document}
\maketitle

\begin{abstract}
    In this article, we show that every centered Gaussian measure on an infinite dimensional separable \F{} space $X$ over $\mathbb R$ admits some full measure Banach intermediate space between $X$ and its Cameron-Martin space. We provide a way of generating such spaces and, by showing a partial converse, give a characterization of Banach intermediate spaces. Finally, we show an example of constructing an $\alpha$-H\"older intermediate space in the space of continuous functions, $\mathcal C_0[0, 1]$ with the classical Wiener measure.
\end{abstract}

\section{Introduction}
Gaussian measures on topological vector spaces are a central object of study in probability theory. The primary examples of Gaussian measures on topological vector spaces are continuous Gaussian processes realized as Gaussian measures on the space of continuous functions $\mathcal C([0,T],\mathbb R^n)$ under the sup norm --- the canonical example of a continuous Gaussian process being Brownian motion. Since then, Gaussian measures have been generalized to Banach spaces and furthermore to more general topological vector spaces. See \cite{Bogachevbook} for a general introduction to infinite dimensional Gaussian measure theory. 

It is interesting to see what properties of the canonical example of classical Wiener measure generalize to Gaussian measures on more general spaces. In \cite{Baldi}, the author recalled that there is an ``intermediate space" for Brownian motion. Let $W_0^{1,2}([0,T])$ denote the Cameron-Martin or Reproducing Kernel Hilbert Space of absolutely continuous functions with $L^2$ weak derivative. Let $\mathcal C_s^{\alpha}([0,T])$ denote the set of ``small" $\alpha$-H\"{o}lder functions $f$ so that the modulus of continuity
\[\omega(\delta):=\sup_{0\leq s\leq t\leq T, |t-s|\leq \delta} |f(t)-f(s)|\]
is such that
\[\lim_{\delta \to 0^+} \frac{\omega(\delta)}{\delta^\alpha}=0.\]
Denote by $\mathcal C_0([0,T])$ the space of continuous functions starting at $0$, with the sup norm and the classical Wiener measure. The author of \cite{Baldi} recalled that for $0< \alpha <\frac{1}{2}$, there is the sequence of compact embeddings
\[W_0^{1,2}\hookrightarrow \mathcal C_s^{\alpha}\hookrightarrow \mathcal C_0, \]
with $\mu(\mathcal C_s^{\alpha})=1.$ The author showed that this is a general phenomenon for Gaussian measures on separable Banach spaces. More precisely, the author showed the following theorem.  

\begin{theorem}[{\cite[Thm.~1.1]{Baldi}}]\label{t:Baldi_intermediate}
Let $E$ be a separable Banach space and $\mu$ a centered Gaussian measure on $E$ and $\RKHS$ the corresponding RKHS. Then there exists a Banach space $\tilde{E}$, separable and such that $\mu(\tilde{E}) = 1$ and the embeddings $E \hookleftarrow \tilde{E} \hookleftarrow \RKHS$ are compact.
\end{theorem}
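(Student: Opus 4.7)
The plan is to build $\tilde{E}$ from the It\^o--Nisio expansion of $\mu$ along a carefully chosen subsequence of truncations, using a weighted tail norm that exploits both the compactness of $i:\mathcal{H}\hookrightarrow E$ and the Gaussian concentration of partial sums. Fix an orthonormal basis $\{e_n\}\subset\mathcal{H}$, let $P_N$ denote orthogonal projection onto $\spann\{e_1,\dots,e_N\}$ in $\mathcal{H}$, and realize $\mu$ as the law of $X=\sum_n\xi_n e_n$ for iid standard Gaussians $\xi_n$, with the series converging in $E$ $\mu$-a.s.\ and partial sums $S_N$. Compactness of $i$ yields $\varepsilon_N:=\|i\circ(I-P_N)\|_{\mathcal{H}\to E}\to 0$, since a compact operator sends weakly null sequences to norm-null ones, applied uniformly to the tail of the ONB. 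It\^o--Nisio combined with Fernique's integrability theorem gives $\sigma_N:=\mathbb{E}\|X-S_N\|_E\to 0$, together with the Borell isoperimetric tail bound $\mu(\|X-S_N\|_E>C\sigma_N+t)\le\exp(-ct^2/\sigma_N^2)$.

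\noindent\textbf{Construction of $\tilde{E}$.} Pick a subsequence $N_k\uparrow\infty$ with $\max(\varepsilon_{N_k},\sigma_{N_k})\le 2^{-k}$, and set weights $\lambda_k:=2^{k/2}$. Define $\tilde{E}$ as the completion of $\mathcal{H}$ under
\[
\|h\|_{\tilde{E}}\;:=\;\|h\|_E\;+\;\sup_{k\ge 1}\lambda_k\,\|(I-P_{N_k})h\|_E,
\]
identified with its image in $E$ via the continuous extension of $\mathcal{H}\hookrightarrow E$. The estimate $\|(I-P_{N_k})h\|_E\le\varepsilon_{N_k}\|h\|_\mathcal{H}$ together with $\lambda_k\varepsilon_{N_k}\le 2^{-k/2}\to 0$ ensures the norm is finite on $\mathcal{H}$ with $\|\cdot\|_{\tilde{E}}\lesssim\|\cdot\|_\mathcal{H}$, so $\mathcal{H}\hookrightarrow\tilde{E}$ continuously; and $\tilde{E}\hookrightarrow E$ continuously by construction.

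\noindent\textbf{Verification and main obstacle.} Three items remain. (i) $\mu(\tilde{E})=1$: the Gaussian tail yields $\mu(\lambda_k\|X-S_{N_k}\|_E>1)=\mu(\|X-S_{N_k}\|_E>2^{-k/2})\le\exp(-c\cdot 2^k)$ for large $k$, which is summable, so Borel--Cantelli gives $\sup_k\lambda_k\|X-S_{N_k}\|_E<\infty$ $\mu$-a.s. (ii) $\tilde{E}\hookrightarrow E$ compact: bounded sets in $\tilde{E}$ satisfy $\|(I-P_{N_k})x\|_E\le M/\lambda_k\to 0$ uniformly, and each $P_{N_k}$ has finite-dimensional range, so an Arzel\`a--Ascoli-type truncation argument shows the $\tilde{E}$-unit ball is totally bounded in $E$. (iii) $\mathcal{H}\hookrightarrow\tilde{E}$ compact: for $h_j\in B_\mathcal{H}$, extract a subsequence with $h_{j_\ell}\to h$ in $E$ (compactness of $i$) and $h_{j_\ell}\rightharpoonup h$ in $\mathcal{H}$ (weak compactness of $B_\mathcal{H}$); given $\eta>0$, pick $k_0$ with $\sup_{k\ge k_0}2\lambda_k\varepsilon_{N_k}<\eta$, which controls the tail of the sup, while for $k<k_0$ (finitely many) the weak convergence of the Fourier coefficients $\langle h_{j_\ell},e_n\rangle_\mathcal{H}$ combined with finite-dimensionality of $\mathrm{range}(P_{N_{k_0}})\subset E$ force each $\lambda_k\|(I-P_{N_k})(h_{j_\ell}-h)\|_E\to 0$. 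The hard part is (iii): precompactness of $B_\mathcal{H}$ in $\tilde{E}$ rests on the joint smallness $\lambda_k\varepsilon_{N_k}\to 0$, which couples the weight growth to the compactness rate and must be orchestrated simultaneously with the measure condition in (i). A secondary, technical point is ensuring the completion $\tilde{E}$ injects into $E$, i.e., that no nonzero element has trivial $E$-image; this can be arranged by choosing $\{e_n\}$ to be linearly independent in $E$ in a strong enough sense (e.g., a Schauder basis of their $E$-closed span), making the extension of each $(I-P_{N_k})$ to $\tilde{E}$ well-defined as a continuous map into $E$.
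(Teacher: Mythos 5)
Your construction is essentially Baldi's original argument (and close in spirit to Bogachev's Theorem 3.6.5), which is a genuinely different route from the one this paper takes. You build $\tilde E$ from a weighted supremum of tail norms $\sup_k\lambda_k\|(I-P_{N_k})h\|_E$ along finite-dimensional projections, balancing the weight growth against the operator-norm decay $\varepsilon_{N_k}$ (for compactness of $\mathcal H\hookrightarrow\tilde E$) and against Gaussian concentration of $\|X-S_{N_k}\|_E$ (for full measure via Borel--Cantelli). The paper instead prescribes the closed unit ball of the intermediate space directly, as the closed convex hull in $X$ of a rescaled copy $\phi(S^{\mathcal H})$ of the unit sphere of $\mathcal H$, takes the Minkowski functional as the norm, and obtains full measure not from a quantitative tail estimate but by enlarging the ball with a compact set of positive measure and invoking the zero--one law for measurable affine subspaces. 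Your approach yields explicit rates and is tied to the Banach setting; the paper's yields a characterization of all such spaces (Prop.~\ref{p:converse}) and extends to Fr\'echet spaces. Your steps (i)--(iii) are sound: $\varepsilon_N\to0$ follows from compactness of $i$ by passing to the adjoint, the concentration bound is Borell--Sudakov--Tsirelson with weak variance controlled by $\sigma_N$, and the splitting at $k_0$ in (iii) is the right mechanism.

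There is, however, a genuine gap at the point you flag, and your proposed repair does not close it. Defining $\tilde E$ as the abstract completion of $\mathcal H$ creates two problems. First, injectivity into $E$: a $\|\cdot\|_{\tilde E}$-Cauchy sequence $h_n$ with $h_n\to0$ in $E$ need not satisfy $\langle h_n,e_m\rangle_{\mathcal H}\to0$, because for an arbitrary ONB the coordinate functionals are only $\mu$-measurable linear functionals, not elements of $E^*$; asking that $\{e_n\}$ be a Schauder basis of its $E$-closed span is not something you show can be arranged. Second, $\mu(\tilde E)=1$ requires that $X$ actually belong to the completion, i.e.\ that $S_{N_k}\to X$ in $\|\cdot\|_{\tilde E}$, which needs $\lambda_k\|X-S_{N_k}\|_E\to0$ a.s.\ --- slightly more than the boundedness your Borel--Cantelli step delivers (your tail bound is strong enough to give this, but with a smaller threshold than $1$). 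Both issues vanish with the standard device: Gram--Schmidt an $\mathcal H$-dense sequence inside $R_\mu(E^*)$ so that $e_n=R_\mu f_n$ with $f_n\in E^*$ and $\{e_n\}$ orthonormal in $\mathcal H$; then $P_Nx=\sum_{n\le N}f_n(x)e_n$ is a continuous finite-rank operator on all of $E$, and $\tilde E$ can be defined directly as the subspace of $x\in E$ with $\lambda_k\|(I-P_{N_k})x\|_E\to0$ (the little-$o$ condition also secures separability). With that modification the rest of your argument goes through.
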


In this article, we present a generalization to the case of separable Fr\'echet spaces. We furthermore characterize the full measure intermediate spaces through ``shape functions". More precisely, we have the following theorem.

\begin{theorem}\label{t:main}
Let $\mu$ be a centered Gaussian measure on a separable infinite dimensional Fr\'echet space $X$ with Cameron-Martin space $\RKHS$. Then there exists a linear subspace $E$ with a norm $\|\cdot\|_E$ lower semicontinuous in metric, such that 
\[\RKHS\hookrightarrow E \hookrightarrow X,\]
with the embeddings compact. In particular, there exists $E$ such that $\mu(E) = 1$. Conversely, any space $E$ (full measure or not) with such property can be generated by a ``shape function" $\phi$.
\end{theorem}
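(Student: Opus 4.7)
Since $X$ is a separable Fr\'echet space, its topology is generated by an increasing sequence of seminorms $\{p_n\}$, equivalently by a translation-invariant metric $d$. A key input will be the standard fact that the closed unit ball $B_\RKHS$ of the Cameron--Martin space is $X$-compact. My plan is: (i) construct $E$ as the Minkowski functional of a convex, $X$-closed enlargement of $B_\RKHS$ parameterised by a shape function $\phi\colon \mathbb{N}\to(0,\infty)$; (ii) verify compactness of both embeddings and then invoke Fernique-type integrability in the Fr\'echet setting together with the Gaussian zero--one law to ensure $\mu(E)=1$; (iii) for the converse, extract $\phi$ directly from any given $E$.

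For existence, pick $\phi\colon \mathbb{N}\to(0,\infty)$ increasing to $\infty$ and a decreasing sequence of $d$-balls $U_n := \{d(\cdot,0)\leq \varepsilon_n\}$ with $\varepsilon_n \downarrow 0$, $\varepsilon_1$ large enough that $B_\RKHS \subset U_1$, and the coupling condition $\phi(n)\varepsilon_n \to 0$. Set
\[ K := \cl\bigl(\convhull\bigl(\bigcup_{n\geq 1} \phi(n)\bigl(B_\RKHS \cap U_n\bigr)\bigr)\bigr), \]
let $\|\cdot\|_E$ be its Minkowski functional, and put $E := \{x \in X : \|x\|_E < \infty\}$. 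Since $K$ is $X$-closed, balanced, and convex, $\|\cdot\|_E$ is lower semicontinuous in $d$, and combined with the continuous embedding $E \hookrightarrow X$ this gives completeness of $(E, \|\cdot\|_E)$. The inclusion $\RKHS \hookrightarrow E$ holds because $B_\RKHS = B_\RKHS \cap U_1 \subset \phi(1)(B_\RKHS \cap U_1) \subset K$ once $\phi(1)\geq 1$. Compactness of $E \hookrightarrow X$ follows from a Hausdorff-totally-bounded argument: each generator $\phi(n)(B_\RKHS \cap U_n)$ is $X$-compact with $d$-diameter at most $2\phi(n)\varepsilon_n \to 0$, so the union is $X$-totally-bounded, and total boundedness is preserved under convex hull and closure in the Fr\'echet setting, whence $K$ is $X$-compact; a symmetric argument, controlling the $E$-diameter of tails of $B_\RKHS$, yields compactness of $\RKHS \hookrightarrow E$. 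For full measure, Fernique's theorem extended to Fr\'echet spaces gives exponential integrability of each $p_n$ under $\mu$, so $\phi$ can be chosen slowly enough that $\mu(K) > 0$; the Gaussian zero--one law on the measurable linear subspace $E$ then upgrades this to $\mu(E) = 1$.

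For the converse, given any $E$ with the stated properties, define
\[ \phi(n) := \sup\bigl\{r \geq 0 : r(B_\RKHS \cap U_n) \subset B_E\bigr\}, \]
where $B_E$ denotes the closed $E$-unit ball. Continuity of $\RKHS \hookrightarrow E$ gives $\phi(1) > 0$, while the $X$-shrinking of $B_\RKHS \cap U_n$ combined with compactness of $E \hookrightarrow X$ forces $\phi(n) \to \infty$. A straightforward inclusion/convex-hull argument then shows that the set $K$ built from this extracted $\phi$ via the construction above coincides with $B_E$ up to $X$-closure, so $E$ is indeed generated by $\phi$.

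The main obstacle throughout is the careful coupling of $\phi$ with the sequence $\{\varepsilon_n\}$: in a Fr\'echet space no single seminorm controls compactness, so verifying $X$-total boundedness of $K$, $E$-total boundedness of $B_\RKHS$, and invertibility of the shape-function construction in the converse all require a diagonal argument across the full family $\{p_n\}$, rather than the single-norm Arzel\`a--Ascoli-type reasoning available in Baldi's Banach-space setting.
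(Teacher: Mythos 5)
Your construction of $K$ as the $X$-closed convex hull of dilated shells $\phi(n)(B_\RKHS\cap U_n)$ is a discretized version of the paper's shape-function construction, and the two compactness verifications follow essentially the paper's route (total boundedness of the generating set plus preservation of compactness under closed convex hulls in \F{} spaces; metric-Cauchy implies norm-Cauchy on $B_\RKHS$ via the blow-up of $\phi$ near $0$). Two caveats there: in a \F{} space the metric is not homogeneous, so the claimed $d$-diameter bound $2\phi(n)\varepsilon_n$ for $\phi(n)(B_\RKHS\cap U_n)$ requires fixing a metric with $d(0,\lambda x)\leq \lambda\, d(0,x)$ for $\lambda\geq 1$ (the paper sidesteps this by phrasing its condition b) in terms of preimages of neighborhoods of a compact set $T$); and the lower semicontinuity of $\|\cdot\|_E$ should be stated as coming from the $X$-closedness of all dilates $aK$, as in the paper.

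The genuine gap is the full-measure step. You assert that Fernique-type exponential integrability of the seminorms $p_n$ lets one choose $\phi$ ``slowly enough that $\mu(K)>0$.'' No argument connects these two facts: $K$ is a compact set in an infinite-dimensional space, exponential integrability of seminorms says nothing about which compact sets are non-null, and whether a shape function can be tuned to make $\mu(K)>0$ directly is precisely the question the paper leaves open in its conclusion. The paper instead gets full measure by a soft detour: inner regularity of the Radon measure $\mu$ produces \emph{some} compact $K'\subset B^d$ with $\mu(K')>0$; an enlargement lemma shows that replacing $K$ by the closed convex symmetric hull of $K\cup K'$ still yields an intermediate space; then $\mu(E)\geq\mu(K')>0$ and the Gaussian zero--one law gives $\mu(E)=1$. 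Your proof needs either this detour or a genuine quantitative argument (Baldi's Banach-space proof uses a series expansion of the measure, which you do not attempt and which does not obviously transfer). A second, smaller gap is the converse: your extracted $\phi(n)=\sup\{r: r(B_\RKHS\cap U_n)\subset B_E\}$ is a single scalar per shell, dictated by the worst direction, so the resulting $K$ can be strictly smaller than $\cl_X B_E$ and need not induce an equivalent norm; the ``straightforward inclusion/convex-hull argument'' is exactly what is missing. The paper avoids this by letting the shape function depend on the direction, $\phi(f)=f/\|f\|_E$, so that its image lies on the unit sphere of $E$ and the lower semicontinuity of the norm identifies $\cl_X\convhull(\phi(S^\RKHS))$ with the closed unit ball of $E$.
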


See Prop.~\ref{p:full_measure_existence} along with Prop.~\ref{p:converse} for full statements and proofs. One application of Theorem \ref{t:main} is exponential tightness of Gaussian measures, as noted in \cite{Baldi}.

\section{Preliminaries}

In this section, we collect some useful results from functional analysis and from infinite dimensional Gaussian measure theory. 

\subsection{Gaussian measures and Cameron-Martin spaces}

In this subsection, we collect some results on Gaussian measures on topological vector spaces. For a comprehensive introduction, see \cite{Bogachevbook}. 

\begin{notation}\label{d:X}
Let $X$ be an infinite dimensional separable \F{} space over $\mathbb{R}$ equipped with a centered Gaussian measure $\mu$.
\end{notation}

Since $X$ is a separable metric space, $\mu$ is Radon (\cite[17.11]{KechrisBook}).

The following definitions are taken from \cite{Bogachevbook}, which we reproduce here for convenience.

\begin{definition}[Gaussian measure, centered]
Let $\mathcal{E}(X)$ be the minimal $\sigma$-algebra, with respect to which all continuous linear functionals on $X$ are measurable. A probability measure $\mu$ defined on $\mathcal{E}(X)$ generated by $X^*$ is \emph{Gaussian} if, for any $f \in X^*$ the induced measure $\mu \circ f^{-1}$ is a Gaussian measure on $\mathbb{R}$. The measure $\mu$ is \emph{centered} if all measures $\mu \circ f^{-1}$, $f \in X^*$ are centered.
\end{definition}

Without loss of generality, we assume that the support of $\mu$ is $X$, by which $\mu$ is strictly positive on $X$. Separability under this assumption turns out to be equivalent to $\mu$ being Radon (Thm.~\ref{t:Radon_RKHS_separable}, \ref{t:RKHS_dense}).

\begin{definition}[Covariance operator]
Let $(\cdot)^*$ denote the continuous dual and $(\cdot)'$ the algebraic dual. The operator $R_\mu : X^* \to (X^*)'$,
\begin{equation}
R_\mu : f \mapsto \left((\cdot) : g \mapsto \int_X (f(x) - \overline{f}) (g(x) - \overline{g})\,\mu({\rm d}x)\right)
\end{equation}
is called the \emph{covariance operator} of $\mu$, where overline denotes the $\mu$-mean operator, defined as:
\begin{equation}
\overline{f} = \int f(x)\,\mu({\rm d}x).
\end{equation}
\end{definition}
We hereafter without loss of generality only consider centered Gaussian measures, for which $\overline{f} = \overline{g} = 0$, and
\begin{equation}
R_\mu(f)(g) = \int_X f(x) g(x)\,\mu({\rm d}x).
\end{equation}

The $L^2(\mu)$ closure of $X^*$, denoted by $X_\mu^*$, is the reproducing kernel Hilbert space (RKHS) of $(X, \mu)$; for a centered measure, $R_\mu$ can be easily extended to $X_\mu^*$.

\begin{definition}[Cameron-Martin space]\label{d:CM-space}
In light of Riesz representation theorem, the space $\{h \in X : h = R_\mu(g), g \in X_\mu^*\}$ is known as the Cameron-Martin space, denoted by $\RKHS$. The topology on $\RKHS$ is characterized through $h = R_\mu(g)$; that is, $\langle h_1, h_2 \rangle_{X_\mu^*} = \langle g_1, g_2 \rangle_{\RKHS}$.
\end{definition}
This definition is equivalent to $\RKHS = \{h \in X : \sup\{l(h) : l \in X^*, R_\mu(l)(l) \leq 1\} < \infty\}$. $\RKHS$ will be used from now on to denote the Cameron-Martin space of $X$. The primary reason we care about the Cameron-Martin space $\RKHS$ is the celebrated Cameron-Martin theorem, which states that Gaussian measures are quasi-invariant under translation of $x \in X$ if and only if $x \in \RKHS$. We include some theorems used in the proofs later regarding the properties of Cameron-Martin space. The theorems may have been specialized to our setting, including making the notation consistent.

\begin{theorem}[{\cite[3.2.2]{Bogachevbook}}]\label{t:RKHS_invariant}
Let $\gamma$ be a Radon Gaussian measure on a locally convex space $X$ which is continuously and linearly embedded into a locally convex space $Y$. Then the set $H(\gamma)$ is independent of whether $\gamma$ is considered on $X$ or on $Y$.
\end{theorem}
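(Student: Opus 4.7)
My plan is to reduce the invariance to a statement about the $L^2(\gamma)$-closures of the two dual spaces, and then combine an $L^2$-orthogonal projection argument with Hahn--Banach separation. Let $i : X \hookrightarrow Y$ denote the given embedding and let $\gamma_Y := \gamma \circ i^{-1}$ be the pushforward. The continuity of $i$ yields a restriction map $i^* : Y^* \to X^*$, $i^*\phi := \phi \circ i$, and a change-of-variables computation gives
\[
\int_X (i^*\phi)(i^*\psi)\,d\gamma = \int_Y \phi\psi \, d\gamma_Y \qquad (\phi,\psi \in Y^*),
\]
so $i^*$ extends by continuity to an isometric embedding $J : Y^*_{\gamma_Y} \hookrightarrow X^*_\gamma$ of the $L^2$-closures from Definition~\ref{d:CM-space}. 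Denote by $H_X$ and $H_Y$ the Cameron--Martin spaces computed on $X$ and on $Y$ respectively.

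For the inclusion $H_Y \subset H_X$, take $h = R_{\gamma_Y}(g) \in H_Y$ with $g \in Y^*_{\gamma_Y}$ and set $h' := R_\gamma(Jg) \in H_X \subset X \subset Y$. Approximating $Jg$ in $L^2(\gamma)$ by a sequence $i^*\phi_n$ with $\phi_n \in Y^*$ and invoking the change-of-variables identity, one obtains $\phi(h') = \phi(h)$ for every $\phi \in Y^*$; Hahn--Banach separation in $Y$ then forces $h = h'$, so $h \in X$ and $h \in H_X$.

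For the reverse inclusion, start with $h = R_\gamma(g) \in H_X$, $g \in X^*_\gamma$. Let $P$ denote the $L^2$-orthogonal projection of $X^*_\gamma$ onto the closed subspace $J(Y^*_{\gamma_Y})$, and let $\tilde g \in Y^*_{\gamma_Y}$ be the unique element with $J\tilde g = Pg$. For any $\phi \in Y^*$, since $i^*\phi \in J(Y^*_{\gamma_Y})$,
\[
\phi(h) = (i^*\phi)(h) = \langle i^*\phi, g\rangle_{L^2(\gamma)} = \langle i^*\phi, Pg \rangle_{L^2(\gamma)} = \langle \phi, \tilde g \rangle_{L^2(\gamma_Y)} = \phi\bigl(R_{\gamma_Y}(\tilde g)\bigr),
\]
and Hahn--Banach in $Y$ identifies $h$ with $R_{\gamma_Y}(\tilde g)$, giving $h \in H_Y$.

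The hard part is this second inclusion: $J(Y^*_{\gamma_Y})$ can be a strict closed subspace of $X^*_\gamma$, since there may be genuinely fewer $Y$-continuous functionals than $X$-continuous ones, so one cannot simply reuse the original generator $g$ on the $Y$ side. The orthogonal projection is precisely what manufactures a legitimate generator $\tilde g$ inside the possibly smaller space $Y^*_{\gamma_Y}$ while still reproducing $\phi(h)$ against every $\phi \in Y^*$. The first inclusion, by contrast, amounts to bookkeeping with the change-of-variables formula.
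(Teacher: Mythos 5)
This statement is quoted in the paper as background (Bogachev, Thm.\ 3.2.2) and the paper gives no proof of it, so there is nothing internal to compare against; I can only assess your argument on its own terms. It is essentially correct, and it is close in spirit to the standard proof: the isometry $J$ induced by $i^*$ together with the orthogonal projection onto $J(Y^*_{\gamma_Y})$ is a clean way to manufacture a generator on the $Y$ side, and the Hahn--Banach identification of $h$ with $h'$ (resp.\ with $R_{\gamma_Y}(\tilde g)$) inside $Y$ is the right way to close both inclusions. Two implicit ingredients deserve to be made explicit, because they are exactly where the Radon hypothesis enters. First, you need $\gamma_Y=\gamma\circ i^{-1}$ to be a Radon Gaussian measure on $Y$ (true: images of tight measures under continuous maps are tight, and $\phi\circ i\in X^*$ has a Gaussian law). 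Second, and more importantly, you twice use that covariance representatives actually live in the space rather than merely in the algebraic dual: you set $h':=R_\gamma(Jg)\in X$ and you form $R_{\gamma_Y}(\tilde g)\in Y$. For a general Gaussian measure on a locally convex space, $R_\gamma(g)$ is a priori only an element of $(X^*)'$, and without $h'\in X\subset Y$ the Hahn--Banach separation in $Y$ cannot even be stated. The fact that $R_\gamma(X^*_\gamma)\subset X$ for Radon Gaussian measures is Bogachev's Thm.\ 3.2.3 (and is consistent with the paper's Definition~\ref{d:CM-space}); cite it rather than leaving it tacit.

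One stylistic remark: your closing paragraph labels $H_X\subset H_Y$ as the hard inclusion. Under the equivalent characterization $|h|_{H}=\sup\{l(h):\ \|l\|_{L^2}\le 1\}$ mentioned after Definition~\ref{d:CM-space}, that inclusion is the trivial one (the supremum over the smaller family $i^*Y^*$ is dominated by the supremum over $X^*$, which is what your projection step encodes), whereas $H_Y\subset H_X$ carries the genuine content, namely that $h$ lies in the smaller space $X$ at all --- which is precisely the appeal to $R_\gamma(X^*_\gamma)\subset X$ noted above. This does not affect correctness, but it locates the real work more accurately.
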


\begin{theorem}[{\cite[3.2.4]{Bogachevbook}}]\label{t:H_ball_compact}
Let $\mu$ be a Radon Gaussian measure on $X$. Then the closed unit ball $U_H$ from $\RKHS$ is compact in $X$.
\end{theorem}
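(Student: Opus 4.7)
The plan is to exhibit $U_H$ as a closed subset sitting inside a scalar multiple of a fixed compact, convex, symmetric set in $X$; compactness of $U_H$ then follows at once. Since $\mu$ is Radon, for any $\varepsilon>0$ there is a compact $K_0\subset X$ with $\mu(K_0)>1-\varepsilon$. Replacing $K_0$ by its closed absolutely convex hull $K:=\overline{\operatorname{absconv}(K_0)}$ yields a compact, symmetric, convex set with $\mu(K)\geq \mu(K_0)$; here I use the (classical) fact that in a \F{} (hence quasi-complete) space the closed absolutely convex hull of a compact set is still compact. Taking $\varepsilon$ small gives $\mu(K)$ as close to $1$ as desired; for the argument only $\mu(K)>0$ will be needed.

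The geometric core of the argument is the claim $U_H\subset c_0 K$ for the finite constant $c_0:=1/\Phi^{-1}\!\bigl((1+\mu(K))/2\bigr)$, which I would prove by contradiction. Assume $h\in U_H$ with $h\notin cK$ for some $c>0$. Since $cK$ is closed, convex, symmetric in the locally convex space $X$, Hahn--Banach produces $\ell\in X^*$ with $\ell(h)>c\sup_K \ell$. Symmetry of $K$ gives $\sup_K \ell=\sup_K|\ell|\geq 0$, and this supremum is strictly positive: if $\ell\equiv 0$ on $K$ then $\ell$ vanishes on a set of positive $\mu$-measure, forcing the centered Gaussian law $\mu\circ \ell^{-1}$ to be trivial and hence $\ell\equiv 0$ on $X$ (using $\operatorname{supp}\mu=X$), contradicting $\ell(h)>0$. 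Normalize so that $\sup_K|\ell|=1$ and write $\sigma^2:=R_\mu(\ell,\ell)=\|\ell\|_{X_\mu^*}^2$. The inclusion $K\subset\{|\ell|\leq 1\}$ gives
\[
\mu(K)\leq \mu(\{|\ell|\leq 1\})=2\Phi(1/\sigma)-1,
\]
so $\sigma\leq c_0$. On the other hand, the dual description $\|h\|_H=\sup\{\ell(h):\ell\in X^*,\,R_\mu(\ell,\ell)\leq 1\}$ (cf.\ the remark after Definition \ref{d:CM-space}) yields $\ell(h)\leq \|h\|_H\,\sigma\leq \sigma\leq c_0$. Thus $c<\ell(h)\leq c_0$; the contrapositive gives $U_H\subset c_0 K$.

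Finally, $U_H$ is closed in $X$: the same dual formula lets me write
\[
U_H=\bigcap_{\ell\in X^*,\ R_\mu(\ell,\ell)\leq 1}\{x\in X:\ell(x)\leq 1\},
\]
an intersection of continuous-linear half-spaces, hence closed in $X$. A closed subset of the compact set $c_0 K$ is compact, and the theorem follows. The two steps that need the most care are (i) the compactness of the closed absolutely convex hull of a compact set in a \F{} space, which is the only non-elementary ingredient from locally convex analysis and in my view the main technical obstacle, and (ii) the observation that any $\ell\in X^*$ that vanishes on a set of positive $\mu$-measure must be identically zero, which hinges on $\operatorname{supp}\mu=X$ as set up in Notation \ref{d:X}.
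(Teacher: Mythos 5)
Your proof is correct, and since the paper states this result only as a citation of \cite[3.2.4]{Bogachevbook} without reproducing a proof, there is nothing in the paper to diverge from: your argument (inner regularity producing a compact absolutely convex $K$ of positive measure, the Hahn--Banach/Gaussian-tail estimate giving $U_H\subset c_0K$, and closedness of $U_H$ via the dual formula for $\|\cdot\|_H$) is essentially the standard textbook proof being cited. The only points worth double-checking are the ones you already flagged, plus the implicit use of the paper's standing conventions that $\mu$ is centered and $\operatorname{supp}\mu=X$; both are harmless here.
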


\begin{theorem}[{\cite[3.2.7]{Bogachevbook}}]\label{t:Radon_RKHS_separable}
Let $\mu$ be a Radon Gaussian measure on $X$. Then $\RKHS$ is separable.
\end{theorem}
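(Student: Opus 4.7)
The plan is to exploit the defining isometry of $\RKHS$ and transfer the separability question to the ambient $L^2(\mu)$. By Definition~\ref{d:CM-space}, the map $R_\mu : X_\mu^* \to \RKHS$ is an isometric isomorphism when $\RKHS$ is given the inner product $\langle R_\mu(g_1), R_\mu(g_2)\rangle_{\RKHS} = \langle g_1, g_2\rangle_{X_\mu^*}$. Since isometries preserve separability, it suffices to prove that $X_\mu^*$ is separable. By construction $X_\mu^*$ is a closed subspace of $L^2(\mu)$, and closed subspaces of separable metric spaces are separable, so the problem reduces to proving that $L^2(X, \mathcal{B}(X), \mu)$ itself is separable.

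For this reduction I would argue as follows. As noted just after Notation~\ref{d:X}, $X$ is a separable metrizable space, so its Borel $\sigma$-algebra $\mathcal{B}(X)$ is countably generated: fix a countable dense $\{x_n\} \subset X$ together with a complete translation-invariant metric and take the open balls of rational radii around the $x_n$ as a countable generating family. Let $\mathcal{A}$ denote the (countable) Boolean algebra generated by these balls. A standard monotone class / $\pi$--$\lambda$ argument shows that every Borel set can be approximated in $\mu$-measure by members of $\mathcal{A}$, so the rational-coefficient simple functions supported on finite disjoint unions of members of $\mathcal{A}$ form a countable collection dense in the set of all simple functions in $L^2(\mu)$, which is itself dense in $L^2(\mu)$. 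This gives separability of $L^2(\mu)$, hence of $X_\mu^*$, and hence via $R_\mu$ of $\RKHS$.

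There is no serious technical obstacle here --- the argument is essentially measure-theoretic bookkeeping --- but two points deserve care. First, the separability of $\RKHS$ claimed is with respect to its intrinsic Hilbert topology induced by $\|R_\mu(g)\|_{\RKHS} = \|g\|_{L^2(\mu)}$, not the topology inherited from $X$ (the latter is coarser, so separability there is automatic but weaker); the isometry $R_\mu$ transfers separability precisely for the intrinsic topology. Second, the Radon hypothesis in the statement is not explicitly used in the above argument, which only needs that $\mu$ be a Borel probability measure on a separable metric space. In our setting this is automatic, as recorded just after Notation~\ref{d:X}: every such $\mu$ is Radon by \cite[17.11]{KechrisBook}.
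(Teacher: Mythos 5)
The paper does not prove this statement at all---it is quoted verbatim from Bogachev \cite[3.2.7]{Bogachevbook}---so there is no in-paper argument to compare against; I can only assess your proof on its own terms, and it is correct in the setting the paper actually works in. The chain ``$\RKHS$ isometric to $X_\mu^*$, $X_\mu^*$ a closed subspace of $L^2(\mu)$, $L^2(\mu)$ separable because the Borel $\sigma$-algebra of a separable metrizable space is countably generated'' is sound, and you are right to flag that separability is meant for the intrinsic Hilbert norm (the isometry $R_\mu$ transfers exactly that). Two remarks on scope. First, as you yourself observe, your argument does not use the Radon hypothesis but instead uses that $X$ is a separable Fr\'echet space (Notation~\ref{d:X}); Bogachev's theorem is stated for a Radon Gaussian measure on a \emph{general} locally convex space, where one cannot assume $L^2(\mu)$ is separable a priori, and the standard proof there uses Radonness to concentrate $\mu$ on a countable union of metrizable compacta before running essentially your $L^2$ argument. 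So your proof establishes the special case the paper needs, not the cited theorem in full generality. Second, a small bookkeeping point: $\mu$ is defined on the cylinder $\sigma$-algebra $\mathcal{E}(X)$ generated by $X^*$, not on $\mathcal{B}(X)$; for a separable Fr\'echet space these coincide, but that identification deserves one sentence, since your density argument is carried out on the Borel algebra while $X_\mu^*$ is by definition the $L^2(\mu)$-closure of $X^*$. Neither point is a gap in the paper's context; both would be gaps if you intended to reprove Bogachev's statement as written.
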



\begin{theorem}[{\cite[2.5.5, see also 2.5.2]{Bogachevbook}}]\label{t:zero_one}
Let $\mu$ be a Radon Gaussian measure on $X$ and let $L$ be a $\mu$-measurable affine subspace in $X$. Then either $\gamma(L) = 0$ or $\gamma(L) = 1$.
\end{theorem}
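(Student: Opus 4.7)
The plan is to first reduce to the case of a measurable linear subspace by a symmetry argument, then use Cameron-Martin quasi-invariance to force such a subspace to contain $\RKHS$, and finally upgrade this to a zero-one dichotomy via a characteristic-function computation. Suppose $\mu(L)>0$ and fix any $x_0\in L$; writing $L=x_0+L_0$ with $L_0:=L-x_0$ a $\mu$-measurable linear subspace, for iid $X,X'\sim\mu$ the event $\{X\in L\}\cap\{X'\in L\}$ forces $X-X'\in L_0$. Since $X-X'$ is centered Gaussian with covariance twice that of $\mu$ and $L_0$ is invariant under nonzero scaling, this gives $\mu(L_0)\ge\mu(L)^2>0$. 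Once $\mu(L_0)=1$ is established, $L$ and $L_0$ are cosets that are either equal or disjoint: disjointness would yield $\mu(L)+\mu(L_0)\le 1$, contradicting $\mu(L)>0$, so $L=L_0$ and $\mu(L)=1$.

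For a measurable linear $L_0$ with $\mu(L_0)>0$, I would next show $\RKHS\subseteq L_0$. By Cameron-Martin, $\mu$ and $\mu(\cdot+h)$ are equivalent for every $h\in\RKHS$, so $\mu(L_0+h)>0$. Linearity of $L_0$ makes the family $\{L_0+h:h\in\RKHS\}$ pairwise either equal or disjoint, with distinct members indexed by the quotient $\RKHS/(\RKHS\cap L_0)$. A probability space supports at most countably many pairwise disjoint sets of positive measure, so this quotient is countable; but any nontrivial quotient of a real vector space is uncountable (any $v\notin W$ generates $\{tv+W:t\in\mathbb{R}\}$, a continuum of distinct cosets). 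Hence $\RKHS\cap L_0=\RKHS$, i.e., $\RKHS\subseteq L_0$.

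The main obstacle is the last step: upgrading $\RKHS$-invariance of $L_0$ to $\mu(L_0)\in\{0,1\}$. For each $h\in\RKHS$ and $t\in\mathbb{R}$, the identity $L_0-th=L_0$ (valid because $th\in\RKHS\subseteq L_0$) combined with the Cameron-Martin density yields $\int_{L_0}e^{t\hat h}\,d\mu=\mu(L_0)\,e^{t^2\|h\|_{\RKHS}^2/2}=\mu(L_0)\int_X e^{t\hat h}\,d\mu$, where $\hat h\in X_\mu^*$ is the Riesz image of $h$. Thus $\mathbf{1}_{L_0}$ is uncorrelated with every exponential $e^{t\hat h}$; running the same argument with a linear combination $t_1h_1+\cdots+t_nh_n\in\RKHS$ gives the joint moment-generating-function factorization, so $\mathbf{1}_{L_0}$ is independent of $\sigma(\hat h:h\in\RKHS)$. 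Since $X_\mu^*$ is the $L^2(\mu)$-closure of $X^*$ and $X^*$ generates the Borel $\sigma$-algebra on the separable metric space $X$, this $\sigma$-algebra coincides with the $\mu$-completed Borel $\sigma$-algebra, which contains $L_0$. Hence $\mathbf{1}_{L_0}$ is independent of itself, forcing $\mu(L_0)^2=\mu(L_0)$ and $\mu(L_0)\in\{0,1\}$; combined with the reduction step, $\mu(L)\in\{0,1\}$.
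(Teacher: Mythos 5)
This statement is quoted from Bogachev's book and the paper gives no proof of it, so there is no in-paper argument to compare against; your proposal has to be judged on its own. Your overall route --- reduce to a linear subspace, use Cameron--Martin quasi-invariance plus a coset-counting argument to show that a positive-measure measurable linear subspace contains $\RKHS$, then factor the moment generating function to get independence of $\mathbf{1}_{L_0}$ from $\sigma(X_\mu^*)$ and hence from itself --- is a correct and standard proof of the Gaussian zero--one law, close in spirit to Bogachev's own. Steps 2 and 3 are sound: translates $L_0+h$ with $h\in\RKHS$ stay $\mu$-measurable because $\mu(\cdot+h)\sim\mu$; the uniqueness theorem for Laplace transforms of finite measures on $\mathbb{R}^n$ legitimately upgrades the factorized exponential moments to genuine independence; and $\sigma(X^*)=\mathcal{B}(X)$ holds because $X$ is a separable \F{} space.

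The one genuine gap is at the very first step: you assert that $L_0=L-x_0$ is a $\mu$-measurable linear subspace. For a Radon measure on an infinite-dimensional space the class of $\mu$-measurable sets is \emph{not} translation invariant --- only translations by Cameron--Martin vectors preserve the measure class, and $x_0$ need not lie in $\RKHS$ --- so measurability of $L_0$, which you use both to write $\mu(L_0)\ge\mu(L)^2$ and throughout Steps 2 and 3, does not follow from measurability of $L$. This is repairable: by inner regularity choose a $\sigma$-compact $C\subseteq L$ with $\mu(C)=\mu(L)>0$; then $V:=\spann(C-C)$ is a $\sigma$-compact, hence Borel, linear subspace contained in $L_0$, your two-sample argument gives $\mu(V)\ge\mu(C)^2>0$ (since $C-C\subseteq V$ and $V$ is scaling invariant), and Steps 2 and 3 applied to $V$ yield $\mu(V)=1$. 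Every coset of $V$ other than $V$ itself is then $\mu$-null, so $\mu(L)>0$ forces $L\cap V\neq\emptyset$, whence $L\supseteq V$ and $\mu(L)=1$. With that patch the argument is complete.
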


\begin{theorem}[{\cite[3.6.1]{Bogachevbook}}]\label{t:RKHS_dense}
Let $\mu$ be a centered Radon Gaussian measure on $X$. Then the topological support of $\mu$ coincides with $\overline{\RKHS}$ (closure in $X$). In particular, $\overline{\RKHS}$ is separable.
\end{theorem}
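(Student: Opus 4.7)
The plan is to prove both set-theoretic inclusions. For $\operatorname{supp}(\mu) \subseteq \overline{\RKHS}$ I will argue by contrapositive: given $x_0 \in X \setminus \overline{\RKHS}$, Hahn--Banach separation supplies $f \in X^*$ with $f(x_0) \neq 0$ and $f|_{\overline{\RKHS}} \equiv 0$. The element $R_\mu(f) \in \RKHS$ then satisfies $f(R_\mu(f)) = 0$, but unwinding the Riesz identification used in Definition~\ref{d:CM-space} one has $f(R_\mu(f)) = R_\mu(f)(f) = \int_X f^2\,d\mu$, forcing $f = 0$ $\mu$-almost surely. Continuity of $f$ then makes $\{y \in X : f(y) \neq 0\}$ an open, $\mu$-null neighborhood of $x_0$, so $x_0 \notin \operatorname{supp}(\mu)$.

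For the reverse inclusion $\overline{\RKHS} \subseteq \operatorname{supp}(\mu)$, closedness of the support reduces the task to $\RKHS \subseteq \operatorname{supp}(\mu)$, and the key auxiliary step is to establish $0 \in \operatorname{supp}(\mu)$. I plan to extract this from the Gaussian scaling identity $\mu = S_*(\mu \otimes \mu)$ with $S(x,y) = (x-y)/\sqrt{2}$, valid because for independent $\xi, \eta \sim \mu$ one has $(\xi - \eta)/\sqrt{2} \sim \mu$. For Radon measures on the separable metric space $X$, the support of a product is the product of supports, so $\operatorname{supp}(\mu) = \overline{S(\operatorname{supp}(\mu) \times \operatorname{supp}(\mu))}$; choosing any $x$ in the (nonempty) support gives $S(x,x) = 0 \in \operatorname{supp}(\mu)$. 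Given this, for $h \in \RKHS$ and any open $U \ni h$ the translate $U - h$ is an open neighborhood of $0$ so $\mu(U - h) > 0$, and Cameron--Martin quasi-invariance along $h$ upgrades this to $\mu(U) > 0$, placing $h$ in $\operatorname{supp}(\mu)$.

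The ``in particular'' separability claim then follows formally. Theorem~\ref{t:Radon_RKHS_separable} supplies a countable $\|\cdot\|_\RKHS$-dense subset $D \subset \RKHS$; the inclusion $\RKHS \hookrightarrow X$ is continuous since the $\RKHS$-unit ball is $X$-compact, hence $X$-bounded, by Theorem~\ref{t:H_ball_compact}; therefore $D$ remains dense in the subspace $X$-topology on $\RKHS$ and hence also in its $X$-closure $\overline{\RKHS}$.

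The main obstacle I anticipate is the rigorous proof that $0 \in \operatorname{supp}(\mu)$. The convolution/scaling route above is succinct but depends on the product-support identity for Radon measures on separable metric spaces; a back-up argument would be to show directly that every basic Fr\'echet neighborhood of $0$ of the form $\{p < \varepsilon\}$, with $p$ a continuous seminorm on $X$, has positive $\mu$-measure, which reduces via Anderson's inequality (and the nonemptiness of the support in the Banach quotient $X/\ker p$) to standard Gaussian small-ball positivity.
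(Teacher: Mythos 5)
The paper does not prove this statement at all --- it is quoted verbatim from Bogachev [3.6.1] as a black box --- so there is no in-paper argument to compare against; judged on its own, your reconstruction is correct and is essentially the standard textbook proof. The three ingredients (Hahn--Banach separation plus the covariance identity $f(R_\mu(f))=\int_X f^2\,d\mu$ for $\operatorname{supp}(\mu)\subseteq\overline{\RKHS}$; the scaling identity $\mu=S_*(\mu\otimes\mu)$ with $S(x,y)=(x-y)/\sqrt2$ to get $0\in\operatorname{supp}(\mu)$; Cameron--Martin quasi-invariance to translate from $0$ to any $h\in\RKHS$) all work. Two small remarks. First, the step where you evaluate $f$ at the point $R_\mu(f)$ tacitly uses that $R_\mu(f)$ is genuinely an element of $X$ (so that $R_\mu(f)\in\overline{\RKHS}$ and $f$ vanishes there); this is exactly where the Radon hypothesis enters, via Bogachev 3.2.3, and is worth saying explicitly. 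Second, the ``main obstacle'' you anticipate is not one: to conclude $0\in\operatorname{supp}(\mu)$ you only need the elementary inclusion $\operatorname{supp}(\mu)\times\operatorname{supp}(\mu)\subseteq\operatorname{supp}(\mu\otimes\mu)$ (any basic product neighborhood of $(x,x)$ has $\mu\otimes\mu$-measure $\mu(V)\mu(W)>0$) together with continuity of $S$, not the full product-support identity, so the Anderson-inequality back-up is unnecessary. The separability claim via continuity of $\RKHS\hookrightarrow X$ (boundedness of the $\RKHS$-unit ball in $X$ from Theorem~\ref{t:H_ball_compact}) is also correct.
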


\subsection{Topological vector spaces}

Before we proceed, we make some remarks on the topology of metric spaces and topological vector spaces in general that we've found useful for the proofs. See \cite{Rudinbook} for more information.

\begin{notation}
Let $X$ be a topological vector space and $U \subset X$. Denote the closure of $U$ in $X$ by $\cl_X U$. Denote the convex hull of $U$ by $\convhull(U)$. Then the closed convex hull of $U$ is $\cl_X \convhull(U)$.
\end{notation}

\begin{definition}[Bounded, topologically]
Let $S \subset X$. $S$ is \emph{bounded} if and only if for every neighborhood $U$ of $0$ there exists some $r \in \mathbb{R}$ such that $S \subset r U$.
\end{definition}



\begin{theorem}[{\cite[3.1.12]{NBbook}}]\label{t:completeness}
Let $\mathcal{T}_s$ and $\mathcal{T}_w$ be Hausdorff group topologies for a group $X$. Let $V_s(0)$ denote the filter of $\mathcal{T}_s$-neighborhoods of $0$. If $\mathcal{T}_s$ is stronger than $\mathcal{T}_w$ and there exists a base $\mathcal{B}_w$ of $\mathcal{T}_w$-complete sets for $V_s(0)$, then $X$ is $\mathcal{T}_s$-complete.
\end{theorem}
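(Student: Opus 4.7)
The plan is to verify completeness by fixing an arbitrary $\mathcal{T}_s$-Cauchy filter $\mathcal{F}$ on $X$ and exhibiting a point $z \in X$ to which it $\mathcal{T}_s$-converges. Since $\mathcal{T}_s$ is finer than $\mathcal{T}_w$, every $\mathcal{T}_w$-neighborhood of $0$ is automatically a $\mathcal{T}_s$-neighborhood of $0$, so $\mathcal{F}$ is also $\mathcal{T}_w$-Cauchy. My strategy is to first locate a $\mathcal{T}_w$-limit candidate $z$ using $\mathcal{T}_w$-completeness of some $B_0 \in \mathcal{B}_w$, and then to upgrade $\mathcal{T}_w$-convergence to $\mathcal{T}_s$-convergence by exploiting the cofinality of the $\mathcal{T}_w$-complete sets $\mathcal{B}_w$ in $V_s(0)$. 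A supporting fact I will need is that a $\mathcal{T}_w$-complete subset of the Hausdorff group $(X,\mathcal{T}_w)$ is $\mathcal{T}_w$-closed; this follows because any net in such a subset that $\mathcal{T}_w$-converges in $X$ is $\mathcal{T}_w$-Cauchy and hence converges inside the subset, with Hausdorffness pinning down the limit.

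For the first step, pick any $B_0 \in \mathcal{B}_w \subset V_s(0)$. By the $\mathcal{T}_s$-Cauchy property, find $F_0 \in \mathcal{F}$ with $F_0 - F_0 \subset B_0$, and fix $a_0 \in F_0$. Then $-a_0 + F_0 \subset B_0$, so the translate $-a_0 + \mathcal{F}$ (still $\mathcal{T}_s$-Cauchy, since group translations are uniformly continuous homeomorphisms) contains a member lying in $B_0$. Its trace on $B_0$ is a $\mathcal{T}_w$-Cauchy filter on $B_0$, which by $\mathcal{T}_w$-completeness of $B_0$ converges in $\mathcal{T}_w$ to some $y \in B_0$. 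Since $B_0 \in -a_0 + \mathcal{F}$, convergence of the trace lifts to convergence of the full filter, so $\mathcal{F} \to z := a_0 + y$ in $\mathcal{T}_w$.

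For the upgrade, let $U \in V_s(0)$; I must find $F \in \mathcal{F}$ with $F \subset z + U$. Choose a symmetric $W \in V_s(0)$ with $W + W \subset U$, then $B \in \mathcal{B}_w$ with $B \subset W$. By $\mathcal{T}_s$-Cauchyness, pick $F \in \mathcal{F}$ with $F - F \subset B$ and any $a \in F$, giving $F \subset a + B$. The crux is to show $a \in z + W$: the translated filter $-a + \mathcal{F}$ $\mathcal{T}_w$-converges to $z - a$ and contains the member $-a + F \subset B$; since the $\mathcal{T}_w$-limit of a filter lies in the $\mathcal{T}_w$-closure of every filter member and $B$ is $\mathcal{T}_w$-closed by the preceding remark, we conclude $z - a \in B \subset W$, whence $a \in z + W$ using $W = -W$. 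Combining, $F \subset a + B \subset z + W + W \subset z + U$, so $\mathcal{F} \to z$ in $\mathcal{T}_s$.

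The main conceptual obstacle is this bootstrap step: merely knowing that $\mathcal{F}$ has a $\mathcal{T}_w$-limit does not by itself yield a $\mathcal{T}_s$-limit, and the hypothesis that $\mathcal{B}_w$ is a base for $V_s(0)$ is precisely what lets one place a $\mathcal{T}_w$-closed ``trap'' inside each $\mathcal{T}_s$-neighborhood and thereby pin the $\mathcal{T}_w$-limit to the tail of the $\mathcal{T}_s$-Cauchy filter in the finer topology. Everything else reduces to routine bookkeeping with translated filters and the symmetric-neighborhood splitting $W + W \subset U$ standard to topological groups, which is where Hausdorffness of both topologies and the full topological-group structure are used.
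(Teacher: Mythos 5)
Your proof is correct: the two-stage argument (first extracting a $\mathcal{T}_w$-limit from the trace of a translated Cauchy filter on a complete member of $\mathcal{B}_w$, then upgrading to $\mathcal{T}_s$-convergence by trapping the tail of the filter inside a $\mathcal{T}_w$-closed set $B\subset W$ and using that complete subsets of a Hausdorff group are closed) is the standard proof of this classical fact. Note that the paper itself offers no proof here --- Theorem~\ref{t:completeness} is quoted verbatim from \cite[3.1.12]{NBbook} --- so there is nothing internal to compare against; your argument matches the textbook one.
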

A ``neighborhood'' in \cite{NBbook} refers to a set that contains an open set. We do not use this convention; except for this one case, all neighborhood refers to open neighborhoods.

\begin{theorem}[{\cite[3.20bc]{Rudinbook}}]\label{t:convex_hull_preserves_compactness}
If $X$ is a locally convex topological vector space and $E \subset X$ is totally bounded, then $\convhull(E)$ is totally bounded. If $X$ is a \F{} space and $K \subset X$ is compact, then $\cl_X \convhull(K)$ is compact. 
\end{theorem}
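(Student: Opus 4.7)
The plan is to prove the two parts separately, the second essentially reducing to the first via completeness. For the first part, let $E \subset X$ be totally bounded and fix a neighborhood $U$ of $0$. I would first invoke local convexity and continuity of addition to choose a convex balanced neighborhood $V$ of $0$ with $V + V \subset U$. By total boundedness of $E$, there are finitely many points $x_1, \ldots, x_n \in X$ such that $E \subset \bigcup_{i=1}^n (x_i + V)$. Let $F = \convhull\{x_1, \ldots, x_n\}$, which lives in the finite-dimensional subspace $\spann\{x_1,\ldots,x_n\}$ and is homeomorphic to a compact simplex, hence totally bounded in $X$.

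The key geometric step would be to show $\convhull(E) \subset F + V$. Any element of $\convhull(E)$ can be written as $\sum_j \lambda_j y_j$ with $\lambda_j \geq 0$, $\sum_j \lambda_j = 1$, and $y_j \in E$. Writing $y_j = x_{i_j} + v_j$ with $v_j \in V$, we get
\[
\sum_j \lambda_j y_j \;=\; \sum_j \lambda_j x_{i_j} \;+\; \sum_j \lambda_j v_j,
\]
where the first summand lies in $F$ and the second in $V$ by convexity of $V$. Covering $F$ by finitely many translates $z_k + V$ and applying $V + V \subset U$ then yields a finite cover of $\convhull(E)$ by translates of $U$, giving total boundedness.

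For the second part, I would use that a \F{} space $X$ is a complete metrizable locally convex topological vector space. Since $K$ is compact, it is totally bounded, so by the first part $\convhull(K)$ is totally bounded, and hence so is its closure $\cl_X \convhull(K)$ (total boundedness passes to closures in metrizable uniform spaces). Being a closed subset of a complete metric space, $\cl_X \convhull(K)$ is itself complete. In a complete metric space, complete plus totally bounded implies compact, finishing the proof.

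The only step with any real content is the inclusion $\convhull(E) \subset F + V$ and the preparatory choice $V + V \subset U$; everything else is bookkeeping. The main obstacle is therefore being careful that local convexity is genuinely used (to produce the convex $V$ which absorbs convex combinations of the $v_j$), and that the \F{} hypothesis in the second part is used precisely to promote total boundedness of a closure to compactness via completeness.
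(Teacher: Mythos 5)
Your proof is correct and is essentially the standard argument from the cited source (Rudin, Thm.\ 3.20(b),(c)); the paper itself gives no proof and simply quotes the result. The only cosmetic point is that $F=\convhull\{x_1,\dots,x_n\}$ is the continuous image of a compact simplex rather than homeomorphic to one, but compactness (hence total boundedness) of $F$ follows all the same.
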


\subsection{Objective and some relevant prior work}

\begin{definition}\label{d:intermediate_space}
A linear subspace $E \subset X$ is an \emph{intermediate space} if and only if $\RKHS \hookrightarrow E \hookrightarrow X$ and both embeddings are compact.
\end{definition}

The objective of this paper is to construct intermediate spaces with additional desirable properties, namely being normed, complete, and of full measure. The following two theorems give similar results, which we present here for comparison.

\begin{theorem}[{\cite[3.6.5]{Bogachevbook}}]\label{t:Bogachev_intermediate}
Let $\mu$ be a Radon measure on a \F{} space $X$. Then there exists a linear subspace $E \subset X$ with the following properties:
\begin{enumerate}
\setlength\parskip{0pt}
\item There is a norm $\|\cdot\|_E$ on $E$ with respect to which $E$ is a reflexive separable Banach space such that the closed unit ball in this norm is compact in $X$;
\item $|\mu|(X \setminus E) = 0$.
\end{enumerate}
\end{theorem}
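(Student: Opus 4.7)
My plan is to construct $E$ as the range of a compact operator $T:\ell^2\to X$; the Hilbert-space structure of $\ell^2$ will make $E$ a separable Hilbert space, hence a reflexive separable Banach space, and the compactness of $T$ will deliver the topological compactness of $B_E$ in $X$.

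First I would use the Radon property of $|\mu|$ to pick, for each $n\geq 1$, a compact set $\widetilde K_n\subset X$ with $|\mu|(X\setminus\widetilde K_n)<4^{-n}$, and set $K_n:=\cl_X\convhull(\widetilde K_n\cup-\widetilde K_n)$, which is balanced convex compact by Theorem \ref{t:convex_hull_preserves_compactness} and still satisfies $|\mu|(X\setminus K_n)<4^{-n}$. Fixing a decreasing base $\{V_m\}$ of balanced convex neighborhoods of $0$ in $X$, I choose scalars $c_n>0$ with $c_nK_n\subset V_n$; then $\bigcup_n c_nK_n\cup\{0\}$ is compact in $X$ since its tail lies in every $V_m$, and a further application of Theorem \ref{t:convex_hull_preserves_compactness} shows that
\[W:=\cl_X\convhull\Bigl(\bigcup_n c_nK_n\cup\{0\}\Bigr)\]
is balanced convex compact. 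Since $K_n\subset c_n^{-1}W\subset\spann(W)$, this forces $|\mu|(X\setminus\spann(W))=0$.

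Next I would Hilbert-factor $W$. A refined Grothendieck-type argument produces $(y_n)\subset X$ with $\sum_n\|y_n\|_m^2<\infty$ for every defining seminorm $\|\cdot\|_m$ and $W\subset\cl_X\convhull\{\pm y_n\}$. Defining $T:\ell^2\to X$ by $T((a_n)):=\sum_n a_ny_n$, Cauchy--Schwarz in each seminorm gives continuity of $T$, and the $N$-term truncations approximate $T$ uniformly on $B_{\ell^2}$ in every seminorm, so $T$ is compact. For a finite combination $x=\sum_{n\leq N}a_ny_n$ with $\sum|a_n|\leq 1$, the bound $\sum a_n^2\leq\sum|a_n|\leq 1$ places $x=T((a_n))$ in $T(B_{\ell^2})$; and $T(B_{\ell^2})$ is closed in $X$ (weak sequential compactness of $B_{\ell^2}$ together with compactness of $T$ send weakly convergent sequences to $X$-convergent ones), so $W\subset T(B_{\ell^2})$.

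Setting $H:=(\ker T)^\perp\subset\ell^2$ and endowing $E:=T(H)=T(\ell^2)$ with the Hilbert norm for which $T|_H:H\to E$ is an isometric isomorphism makes $(E,\|\cdot\|_E)$ a separable Hilbert space, hence a reflexive separable Banach space. Orthogonal projection of any $c\in B_{\ell^2}$ onto $H$ neither increases the norm nor alters $T(c)$, so $B_E=T(B_H)=T(B_{\ell^2})\supset W$; hence $\spann(W)\subset E$ and $|\mu|(X\setminus E)\leq|\mu|(X\setminus\spann(W))=0$. Continuity of $E\hookrightarrow X$ follows from continuity of $T$, and compactness of $B_E$ in $X$ follows from the closed-plus-relatively-compact argument above. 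The main obstacle is the refined Grothendieck lemma: the classical version yields only a null sequence whose closed absolute convex hull contains $W$, and securing the $\ell^2$-summability $\sum_n\|y_n\|_m^2<\infty$ simultaneously in every seminorm requires a diagonal construction over iteratively refined finite $2^{-m}$-nets of $W$ in $\|\cdot\|_m$, rescaled so that the $n$th vector satisfies $\|y_n\|_m\leq C_m/n^2$ for each $m$.
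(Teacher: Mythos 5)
Note first that the paper does not prove this statement: it is quoted from Bogachev for comparison, and the paper's own constructions go a different route. The only hint the paper gives about the standard proof is the Remark on reflexivity, which points to the Davis--Figiel--Johnson--Pe{\l}czy\'nski factorization (\cite[Ch.~5~\S4~Thm.~1]{DiestelNotes}): one builds a compact absolutely convex set $W$ with $|\mu|(X\setminus\spann(W))=0$ exactly as in your first paragraph (that part of your argument is correct and is essentially how Bogachev proceeds), and then applies DFJP interpolation to $W$ to get a reflexive separable Banach space whose unit ball contains $W$ and is still compact in $X$. DFJP needs only weak compactness of $W$ and delivers exactly what the theorem claims --- reflexivity, not Hilbertianity.

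The gap in your proposal is the ``refined Grothendieck lemma,'' and it is not a technical refinement you can push through: the statement is false for general compact absolutely convex sets. Take $X=\ell^2$ and $W=\cl_X\convhull\{\pm a_ne_n\}$ with $a_n=(\log(n+1))^{-1}$; this is compact by Theorem~\ref{t:convex_hull_preserves_compactness} since $a_ne_n\to0$. If $W\subset\cl_X\convhull\{\pm y_m\}$, then pairing $a_ne_n$ with $e_n$ and using $\sum_m|b_m|\le1$ gives $\sup_m|\langle y_m,e_n\rangle|\ge a_n$ for every $n$, hence some $m(n)$ with $|\langle y_{m(n)},e_n\rangle|\ge a_n/2$; since the pairs $(m(n),n)$ are pairwise distinct, $\sum_m\|y_m\|^2=\sum_{m,n}\langle y_m,e_n\rangle^2\ge\tfrac14\sum_n a_n^2=\infty$. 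So no square-summable generating sequence exists, and in particular no rescaling of iterated finite nets can force $\|y_n\|\le C/n^2$: shrinking a net vector by a large factor requires a correspondingly large coefficient to recover the points it covers, which violates the $\ell^1$ constraint defining the convex hull. Your argument treats the lemma as a general-purpose fact about compact sets (the net construction never uses the measure), so this counterexample defeats it; and since your $W$ contains $c_1K_1$ with $K_1$ an essentially arbitrary compact set of large measure, a square-summable representation of $W$ would rescale to one of $K_1$, so there is no reason the specific $W$ you build should be exempt. The underlying issue is that you are proving something strictly stronger than the theorem: containment of $W$ in $\cl_X\convhull\{\pm y_n\}$ with $\sum_n\|y_n\|_m^2<\infty$ would place $W$ inside the image of $B_{\ell^2}$ under a Hilbert--Schmidt-type operator and make $E$ a separable Hilbert space, whereas only a reflexive Banach space is claimed. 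Everything downstream of the lemma (continuity and compactness of $T$, closedness of $T(B_{\ell^2})$, $B_E=T(B_{\ell^2})\supset W$) is fine, but it all hangs on a false premise; replacing the Hilbert factorization by DFJP applied to your $W$ repairs the proof.
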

{
\renewcommand{\thetheorem}{\ref{t:Baldi_intermediate}}
\begin{theorem}[{\cite[Thm.~1.1]{Baldi}}]
Let $E$ be a separable Banach space and $\mu$ a centered Gaussian probability on $E$ and $\RKHS$ the corresponding RKHS. Then there exists a Banach space $\tilde{E}$, separable and such that $\mu(\tilde{E}) = 1$ and the embeddings $E \hookleftarrow \tilde{E} \hookleftarrow \RKHS$ are compact.
\end{theorem}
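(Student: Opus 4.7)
My plan is to bootstrap Theorem~\ref{t:Bogachev_intermediate} rather than build $\tilde{E}$ from scratch. Applying that theorem to the Radon Gaussian measure $\mu$ on the separable Banach space $E$ produces a linear subspace $\tilde{E}_0 \subset E$ with a norm $\|\cdot\|_{\tilde{E}_0}$ making $\tilde{E}_0$ a reflexive separable Banach space whose closed unit ball $U_0$ is compact in $E$, and satisfying $\mu(E \setminus \tilde{E}_0) = 0$. To ensure $\RKHS \subset \tilde{E}$ outright, I enlarge to the Banach space $\tilde{E}$ whose closed unit ball is $K := \cl_E \convhull(U_0 \cup U_H)$; this $K$ is still $E$-compact by Theorem~\ref{t:convex_hull_preserves_compactness}, since $U_0 \cup U_H$ is the union of two $E$-compact sets (using Theorem~\ref{t:H_ball_compact} for $U_H$) and hence totally bounded. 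Then $\tilde{E}$ is a separable Banach space, $\tilde{E} \hookrightarrow E$ is compact (unit ball is $E$-compact), $\RKHS \subset \tilde{E}$ continuously (since $U_H \subset K$), and $\mu(\tilde{E}) \geq \mu(\tilde{E}_0) = 1$.

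It remains to show that $\RKHS \hookrightarrow \tilde{E}$ is compact. The strategy is to reapply Theorem~\ref{t:H_ball_compact}, but now with $\tilde{E}$ (not $E$) as the ambient space. For this I need to view $\mu$ as a Radon Gaussian measure on $\tilde{E}$. Radonness is automatic since $\tilde{E}$ is Polish. Gaussianness reduces to showing every $f \in \tilde{E}^*$ is Gaussian under $\mu|_{\tilde{E}}$; this rests on the standard fact that any $\mu$-measurable linear functional on a Banach space carrying a Gaussian measure lies in the $L^2(\mu)$-closure of the dual, hence is itself Gaussian. Theorem~\ref{t:RKHS_invariant} then identifies the Cameron-Martin space of $\mu$ on $\tilde{E}$ with the original $\RKHS$, and Theorem~\ref{t:H_ball_compact} applied inside $\tilde{E}$ yields $U_H$ compact in $\tilde{E}$---which is precisely the compactness of $\RKHS \hookrightarrow \tilde{E}$.

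The main obstacle will be this ``descent'' of Gaussian structure from $E$ to the finer-topologized full-measure subspace $\tilde{E}$: rigorously checking that every $f \in \tilde{E}^*$ is $\mu$-measurable as a function on $E$ and that its induced law is Gaussian. Each individual ingredient (measurable linear functionals on Gaussian spaces are Gaussian, separable Banach spaces are Polish hence support Radon measures, Cameron-Martin is invariant under continuous linear embeddings) is standard, but composing them requires some care. A direct alternative would construct $\tilde{E}$ to have unit ball $\cl_E \convhull\bigl(\bigcup_n \lambda_n C_n\bigr)$, where the $C_n$ are absolutely convex Radon-compact sets with $n U_H \subset C_n$ and $\mu(C_n) \to 1$, and the $\lambda_n$ are chosen so that the union is totally bounded in $E$; continuity of $\RKHS \hookrightarrow \tilde{E}$ and full measure (via the zero-one law, Theorem~\ref{t:zero_one}) would be immediate, but compactness of $\RKHS \hookrightarrow \tilde{E}$ would require an additional truncation argument against an orthonormal basis $\{e_i\}$ of $\RKHS$ (for which $\|e_i\|_E \to 0$ by compactness), and I expect the Bogachev-based route to be the cleaner path.
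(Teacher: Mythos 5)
Your argument is correct, but note that it is precisely the ``Bogachev bootstrap'' that the paper itself only sketches in the paragraph following the restatement of Thm.~\ref{t:Baldi_intermediate} (``It is also mostly implied by Thm.~\ref{t:Bogachev_intermediate}\dots''); the route the paper actually develops is different. The paper's own proof of the (generalized) statement, via Prop.~\ref{t:existence_of_K} and Prop.~\ref{p:full_measure_existence}, builds the unit ball $K$ directly as $\cl_X\convhull(\phi(S^\RKHS))$ for a shape function $\phi$, verifies compactness of both embeddings by hand, and only then enlarges $K$ by a compact set of positive measure and invokes the zero--one law (Thm.~\ref{t:zero_one}); the payoff of that more constructive route is the converse (Prop.~\ref{p:converse}) characterizing all intermediate spaces with lower semicontinuous norm, which your approach does not yield. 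Two remarks on your version. First, the enlargement by $U_H$ is redundant: since $\mu(\tilde E_0)=1$ and $\tilde E_0$ is a linear subspace, the Cameron--Martin theorem already forces $\RKHS\subset\tilde E_0$ (for $h\in\RKHS$ the translate of $\mu$ by $h$ is equivalent to $\mu$, so $\tilde E_0\cap(\tilde E_0-h)\neq\emptyset$ and hence $h\in\tilde E_0$); once you perform the descent of the Gaussian structure, Thm.~\ref{t:RKHS_invariant} and Thm.~\ref{t:H_ball_compact} applied in $\tilde E_0$ give containment, continuity, and compactness of $\RKHS\hookrightarrow\tilde E_0$ all at once, so you may simply take $\tilde E=\tilde E_0$ and inherit separability and reflexivity for free instead of re-deriving them for the re-normed span of $\cl_E\convhull(U_0\cup U_H)$. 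Second, you correctly isolate the only genuinely delicate step, namely that $\mu$ restricted to the finer-topologized full-measure subspace is again a Radon Gaussian measure; your sketch (continuous functionals on the Polish space $\tilde E$ are Borel on $E$ by Lusin--Suslin, hence $\mu$-measurable linear functionals, hence Gaussian) is the standard and correct way to close it, and it is the same step the paper relies on implicitly in its Denseness remark.
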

}

The result we will show is an extension of Thm.~\ref{t:Baldi_intermediate} to \F{} spaces. It is also mostly implied by Thm.~\ref{t:Bogachev_intermediate}, for $|\mu|(X \setminus E) = 0$ implies $E$ is full measure. Given $E$ is full measure, the Cameron-Martin space of $X$ is also that of $E$; then $\RKHS$ embeds compactly into $E$ by Thm.~\ref{t:H_ball_compact}. However, the proof we will present defines the intermediate space by manipulating the ``shape'' of the unit ball using Definition \ref{d:phi}, through which it provides a local characterization of Banach intermediate spaces.

\section{Results}

We divide the construction process into three steps (and correspondingly into three subsections). We firstly look for spaces that are ``small'' enough to embed into $X$ compactly; then for spaces that are ``large'' enough into which $\RKHS$ can embed compactly; and finally we investigate some properties and flexibilities of thus constructed spaces. We only use the topological property of $\RKHS$ that its closed unit ball is compact in $X$ for the first two steps. Thus, the construction till then is valid for finding intermediate spaces between two arbitrary spaces where one embeds compactly into another. Later, we construct intermediate spaces with other properties such as being full measure.

\subsection{Embedding into \texorpdfstring{$X$}{X} compactly}
Recall the definition of $X$ in Def.~\ref{d:X} and that a compact operator is one that maps bounded set to relatively compact (totally bounded) sets. For convenience in construction, we only look for those that map closed balls to compact sets.

Consider the closed unit ball of the intermediate space.
\begin{notation}\label{n:K}
Let $K \subset X$ denote a symmetric convex compact set throughout this section.
\end{notation}
A symmetric convex set is balanced and therefore $K$ is absolutely convex. 
\begin{definition}\label{d:E}
Let $E$ be the linear span of $K$; that is, $E = \{ r x : x \in K, r \in \mathbb{R} \} = \bigcup_{r > 0} r K$.
\end{definition}
By construction, $K$ is absorbing in $E$. The Minkowski functional $p_K$ defines a seminorm on $E$. Let $M_K = \sup \{ d(0, x) : x \in K \}$; since $K$ is compact, $M_K$ is finite. Without loss of generality, assume $M_K = 1$. Note that then $p_K(x) \geq d(0, x)$ for all $x$, which shows the $p_K$-topology on $E$ is finer than its subspace topology inherited from $X$. Since $K$ is compact, $K$ is bounded in $X$ ($K$ does not contain any nontrivial vector subspace), therefore $p_K$ separates points, and it is in fact a norm. By Thm.~\ref{t:completeness} ($\mathcal{B}_w = \{r K : r \in \mathbb{R}\}$), $E$ is complete under $p_K$. $E$ is hence a Banach space.

\begin{lemma}\label{l:proper_containment}
Let $A$ and $X$ be F-spaces . If there exists a compact linear map $\Lambda : A \to X$, then $\Lambda$ is not surjective.
\end{lemma}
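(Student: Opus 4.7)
My plan is to argue by contradiction, combining the open mapping theorem with Baire category to show that a surjective compact $\Lambda$ would force $X$ to be locally compact, contradicting the fact that $X$ is an infinite-dimensional Hausdorff topological vector space (Notation~\ref{d:X}).

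Suppose toward contradiction that $\Lambda$ is surjective. Since $\Lambda$ is a continuous linear surjection between F-spaces, the open mapping theorem implies $\Lambda$ is open. In the intended application $A$ is the Banach space $E$ of Def.~\ref{d:E}, so let $B$ denote its closed unit ball, which is bounded and absorbing in $A$. By compactness of $\Lambda$, $\overline{\Lambda(B)}$ is compact in $X$, and since $A=\bigcup_{n\in\mathbb{N}} nB$ I obtain $X=\Lambda(A)\subset \bigcup_{n\in\mathbb{N}} n\overline{\Lambda(B)}$. As $X$ is an F-space it is Baire, so some $n\overline{\Lambda(B)}$ has nonempty interior; scaling by $1/n$ (a homeomorphism of $X$), $\overline{\Lambda(B)}$ itself has nonempty interior, and translating an interior point to $0$ produces a compact neighborhood of $0$ in $X$. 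A classical corollary of Riesz's lemma then says that any locally compact Hausdorff topological vector space is finite-dimensional, contradicting $\dim X=\infty$.

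The main obstacle is that this argument, as stated, requires $A$ to admit a bounded neighborhood of $0$, equivalently that $A$ be normable. This normability appears to be genuinely necessary for F-space generality: on the Montel space $\mathbb{R}^{\mathbb{N}}$, every bounded set is relatively compact, so the identity map is a compact surjection from an F-space onto an infinite-dimensional F-space. Since the lemma is invoked in the paper only with $A=E$ a Banach space (cf.\ Def.~\ref{d:E}), the proof above suffices for all intended applications; one could either tighten the hypothesis to require $A$ Banach, or strengthen ``compact'' to the condition that some neighborhood of $0$ in $A$ has relatively compact image under $\Lambda$, in which case the Baire step goes through unchanged.
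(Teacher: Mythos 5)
Your proof is correct for the cases the paper actually needs, and it arrives at the same contradiction as the paper's own argument: a surjective compact $\Lambda$ would make $X$ locally compact, which is impossible for an infinite-dimensional Hausdorff topological vector space by Riesz's theorem. The route differs only slightly: the paper applies the open mapping theorem to conclude that $\Lambda(U)$ is simultaneously open and relatively compact for a suitable neighborhood $U$ of $0$ in $A$, whereas you cover $X=\Lambda(A)\subset\bigcup_{n}n\overline{\Lambda(B)}$ by compact sets and apply Baire category directly --- which makes your invocation of the open mapping theorem superfluous, since the Baire step already does all the work. The genuinely valuable part of your proposal is the caveat: both your argument and the paper's require a bounded neighborhood of $0$ in $A$, i.e.\ that $A$ be locally bounded. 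The paper's proof asserts without justification that compactness of $\Lambda$ yields a neighborhood $U$ of $0$ with $\Lambda(U)$ contained in a compact set; under the paper's stated definition of a compact operator (bounded sets map to relatively compact sets) this step fails for non-normable $A$, and your example of the identity on the Montel space $\mathbb{R}^{\mathbb{N}}$ shows the lemma as literally stated is false in that generality. Since the lemma is only invoked with $A=E$ a Banach space and with $A=\RKHS$ a Hilbert space (both normed), the paper's applications are unaffected, and either of your suggested repairs (restricting to normable $A$, or strengthening ``compact'' to ``some neighborhood of $0$ has relatively compact image'') is adequate and would be worth making explicit in the statement.
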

\begin{proof}
Since compact maps are bounded, $\Lambda$ is continuous. Since $\Lambda$ is compact, there exists some $A$-neighborhood $U$ of $0$ such that $\Lambda(U) \subset K$ where $K$ is compact in $X$. Since both $A$ and $X$ are F-spaces, if $\Lambda$ is surjective, then it is open by open mapping theorem. In particular, $\Lambda(U)$ is open and relatively compact. This contradicts with $X$ being infinite dimensional and not locally compact.
\end{proof}
Let $E$ be an intermediate space. Letting i) $A := E$, $X := X$, ii) $A := \RKHS$, $X := E$, and $\Lambda = \ident$ be the identity map, the lemma shows that $\RKHS \subsetneq E \subsetneq X$.

Since the norm topology is no weaker than metric subspace topology, $K = \cl_X \convhull(S) = \cl_E \convhull(S)$. $K$ is the closed unit ball of $E$. The norm as a function is not continuous in the metric topology because Minkowski functional is continuous if and only if the underlying set is a neighborhood of $0$. However, $\|\cdot\| : E \to \mathbb{R}$ is lower semicontinuous under metric because the norm closed balls $a K$ are closed in metric as well for all $a \in \mathbb{R}$. Reader may refer to \cite[Ch.\ IV \S 6.2]{BourbakiBook} for more information about semicontinuity.

\subsection{Embedding \texorpdfstring{$\RKHS$}{H} compactly}
\begin{notation}
Define $\|x\| : x \mapsto p_K(x)$ for all $x \in E$.
\end{notation}

\begin{notation}
Unless otherwise specified, the topology on $X$, $E$, $\RKHS$ are the metric, norm, and inner product topology, respectively. Let $B^\RKHS$, $B^K$, $B^d$ be the open unit ball of $\RKHS$, $E$, and $X$ (centered at 0), respectively. Let $S^\RKHS$, $S^K$, $S^d$ be the unit sphere of corresponding spaces. The notation $x + B^\cdot_r$ denotes a ball centered at $x$ with radius $r$.
\end{notation}
In particular, $B^\cdot_r$ is identical to $r B^\cdot$ for normed spaces, but not for general metrizable spaces. For our convenience, we fix a complete and translation-invariant metric $d$ such that $\sup_{x \in B^\RKHS} d(0, x) = 1$. A ball in the following proofs always has positive radius and never degenerates to a singleton or empty set.

We now define ``shape functions'' which is used to ensure that $\RKHS$ is embedded compactly into $E$.

\begin{definition}[Shape function]\label{d:phi}
A shape function is any function $\phi : S^\RKHS \to \cl_X(B^d) \cap \RKHS \setminus B^\RKHS$, such that when its domain and codomain are considered under the metric topology of $X$:
\begin{enumerate}\setlength{\parskip}{0pt}
\item[a)] for all $x$, there exists some $k(x)\in \mathbb R_{>0}$ so that $\phi(x) = k(x) x$ and $\phi(-x) = -\phi(x)$, \emph{Denote $|\phi|(x) = k$};
\item[b)] there exists some compact $T \subset X$ such that for every neighborhood $U \subset X$ of $T$, $\phi^{-1}\{U\} \supset V_U \cap S^\RKHS$ where $V_U$ is a metric neighborhood of $0$;
\item[c)] $\phi(S^\RKHS \setminus B^d_\epsilon) \subset \RKHS$ is (inner product) bounded for all $\epsilon > 0$ (cf.\ d);
\item[d)] $\lim_{x \to 0} |\phi|(x) = \infty$.
\end{enumerate}
\end{definition}
An example of shape functions is $\phi : x \mapsto \lfloor d(0, x)^\alpha \rfloor x$ for any $\alpha \in (-1, 0)$ ($T = \{0\}$ for b).
\begin{notation}
The symbol $\phi$ will denote a shape function from now on. Upon clarification, $\phi$ may also denote its homogeneous extension to $\RKHS$, that is, $\phi(x) = \sqrt{\langle x, x \rangle}\phi(\sqrt{\langle x, x \rangle}^{-1} x)$ for all $x \neq 0$ and $\phi(0) = 0$.
\end{notation}
\begin{definition}[Generated space]\label{d:construction}
Let $S = \phi(S^\RKHS)$ and $K = \cl_X \convhull (S)$. Let $\tilde{E}$ be the span of $K$ (cf. Def.~\ref{d:E}), the closed linear span of $\RKHS$ therein is called the \emph{generated space}, denoted by $E$.
\end{definition}
Specifically, by ``generated space", we mean the space $E$ equipped with the induced norm $p_K$. We will verify this agrees with Notation \ref{n:K} in the following proof.

\begin{proposition}\label{t:existence_of_K}
The $K$ in Def.~\ref{d:construction} agrees with Notation~\ref{n:K} and its generated $E$ is an intermediate space.
\end{proposition}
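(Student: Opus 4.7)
The proof splits into two parts: verifying that $K = \cl_X \convhull(S)$ with $S := \phi(S^\RKHS)$ is symmetric, convex, and compact (so that it matches Notation~\ref{n:K} and the Minkowski-functional machinery of the preceding subsection produces a Banach space $\tilde E = \spann K$ with $\tilde E \hookrightarrow X$ compact), and then verifying that $\RKHS$ embeds compactly into the closed linear span $E$.

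For compactness of $K$, the crucial step is total boundedness of $S$ in $X$. Given a balanced open neighborhood $W$ of $0$, I would apply condition (b) of Def.~\ref{d:phi} with $U := T + W$ to obtain a metric neighborhood $V$ of $0$ with $\phi(V \cap S^\RKHS) \subset T + W$, and use the compactness of $T$ to cover $T + W$ with finitely many $W$-translates. Picking $\epsilon > 0$ with $B^d_\epsilon \subset V$, condition (c) places $\phi(S^\RKHS \setminus B^d_\epsilon)$ inside some $\RKHS$-ball $M_\epsilon U_H$, which is compact in $X$ by Thm.~\ref{t:H_ball_compact} and hence also admits a finite $W$-cover. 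Total boundedness of $S$ follows; symmetry of $S$ is immediate from (a); then Thm.~\ref{t:convex_hull_preserves_compactness} gives $\convhull(S)$ totally bounded and $K$ compact, while $K$ is symmetric and convex by continuity of the TVS operations. Thus $K$ matches Notation~\ref{n:K}, and the Banach space $\tilde E$ with $\tilde E \hookrightarrow X$ compact arises as in the preceding subsection.

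For the inclusion $\RKHS \subset E$ with continuous embedding, the codomain constraint $\phi(x) \notin B^\RKHS$ forces $k(x) = \|\phi(x)\|_\RKHS \geq 1$ for every $x \in S^\RKHS$. For $h \in \RKHS \setminus \{0\}$ and $v := h / \|h\|_\RKHS$, I would write $h = (\|h\|_\RKHS / k(v))\, \phi(v) \in (\|h\|_\RKHS / k(v))\, K$ to get $p_K(h) \leq \|h\|_\RKHS$. Consequently $\RKHS \subset \tilde E$ continuously, so $\RKHS \subset E$, and the compact embedding $\tilde E \hookrightarrow X$ restricts to $E \hookrightarrow X$ compactly.

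The principal obstacle is compactness of $\RKHS \hookrightarrow E$. I would take $h_n$ in the closed $\RKHS$-unit ball, extract via Thm.~\ref{t:H_ball_compact} a metric-convergent subsequence $h_{n_k} \to h \in U_H$ in $X$, and pass to a further subsequence on which either $\|h_{n_k} - h\|_\RKHS \to 0$ or $\|h_{n_k} - h\|_\RKHS \geq \delta > 0$. In the first case, the bound $p_K \leq \|\cdot\|_\RKHS$ from the previous paragraph already yields $p_K(h_{n_k} - h) \to 0$. In the second, $v_k := (h_{n_k} - h)/\|h_{n_k} - h\|_\RKHS \in S^\RKHS$ converges to $0$ in the metric of $X$, since bounded scalar multiplication preserves convergence to $0$ in any TVS; condition (d) then forces $k(v_k) \to \infty$, so $p_K(v_k) \leq 1/k(v_k) \to 0$ and $p_K(h_{n_k} - h) = \|h_{n_k} - h\|_\RKHS\, p_K(v_k) \to 0$. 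In either case $h_{n_k} \to h$ in the $p_K$-norm of $E$, establishing the compact embedding and thereby that $E$ is an intermediate space.
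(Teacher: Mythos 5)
Your proposal is correct and follows essentially the same route as the paper: total boundedness of $S=\phi(S^\RKHS)$ via conditions (b), (c) and Thm.~\ref{t:H_ball_compact}, then Thm.~\ref{t:convex_hull_preserves_compactness}, then a sequential argument driven by condition (d) for the compactness of $\RKHS\hookrightarrow E$. Your explicit bound $p_K\leq\|\cdot\|_\RKHS$ and the dichotomy on $\|h_{n_k}-h\|_\RKHS$ actually make rigorous a step the paper treats more loosely (its direct application of (d) to $|\phi|(x_i-x_j)$ through the homogeneous extension), so the added care is welcome rather than a deviation.
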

\begin{proof}
We claim $S$ (the range of $\phi$) is totally bounded in $X$ by showing that it has a finite cover consisting of metric $r$-balls for all $r$.
Since $T$ is compact, the cover $\{ t + B^d_r \}_{t \in T}$ has a finite subcover, the union $U$ over which is a (finite) union of open sets and therefore open. By Def.~\ref{d:phi}b, $\phi^{-1}\{U\}$ contains some metric ball $B^d_{\epsilon_r}$. Then by Def.~\ref{d:phi}c, $\phi(S^\RKHS \setminus B^d_{\epsilon_r}) \subset B^\RKHS_R$ for some $R$, where the latter is totally bounded in $X$ by Thm.~\ref{t:H_ball_compact}. Hence $S \subset U \cup B^\RKHS_R$; $S$ then has a finite cover consisting of metric balls with radius $r$ for all $r > 0$. By Thm.~\ref{t:convex_hull_preserves_compactness} and property a), $K$ is compact and symmetric, respectively; Def.~\ref{d:construction} agrees with Notation \ref{n:K}. We see the norm unit ball $B^K$ is totally bounded in $X$.

($\RKHS \hookrightarrow E$ compactly.) We show that $B^\RKHS$ is relatively compact in $E$. Consider sequential compactness. Since $B^\RKHS$ is relatively compact in $X$ (Thm.~\ref{t:H_ball_compact}), every sequence taking values therein has a convergent (therefore Cauchy) subsequence $\{x_i\}_{i \in \mathbb{N}}$. We show $\{x_i\}$ is norm-Cauchy.
By Def.~\ref{d:phi}d, $\liminf_{N \to \infty} \{ |\phi|(x_i - x_j) : i, j > N\} = \infty$. Then there exists some unbounded $\chi : \mathbb{N} \to \mathbb{R}$ such that $\chi(N) (x_i - x_j) \in K$.
By homogeneity of norm, $\limsup_{N \to \infty} \{ \|x_i - x_j\| : i, j > N \} \leq \lim_{N \to \infty} 2 \chi(N)^{-1} = 0$; the factor $2$ is due to $B^\RKHS - B^\RKHS = 2 B^\RKHS$ and the homogeneous extension of $\phi$. Hence the sequence is norm-Cauchy.
\end{proof}

\subsection{Constructing intermediate spaces to specifications}
\begin{remark}[Denseness]
We note that $\RKHS$ may not be dense in thus constructed $E$, for although $K = \cl_X \convhull S$ where $S \in \RKHS$, it may happen that $\cl_E \convhull S \subsetneq K$ as the norm topology is finer. Since $\mu(E) = 1$, by Thm.~\ref{t:RKHS_invariant}, $\RKHS$ is also the Cameron-Martin space of $E$. Then by Thm.~\ref{t:RKHS_dense} (and definition of topological support), $\mu(\cl_E H) = 1$. This shows that every full measure intermediate space admits a subspace with full measure where $\RKHS$ is dense in norm.
\end{remark}

$K$ can also be chosen such that $E$ is a full measure subspace, by the following results:
\begin{lemma}\label{l:enlargement}
Given any intermediate space with closed unit ball $K$, any (absolutely convex, compact) $K' \supset K$ also generates an intermediate space.
\end{lemma}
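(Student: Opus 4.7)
The plan is to verify that $E' := \bigcup_{r > 0} r K'$, equipped with the Minkowski functional $p_{K'}$, satisfies the three requirements of an intermediate space. First I would observe that the construction of Section 3.1 applies verbatim to $K'$: since $K'$ is absolutely convex and compact in $X$, it is $X$-bounded, so $M_{K'} := \sup\{d(0,x): x \in K'\}$ is finite and $p_{K'}$ is a norm satisfying $p_{K'}(x) \geq d(0,x)/M_{K'}$ on $E'$. By Thm.~\ref{t:completeness}, applied with $\mathcal{B}_w = \{r K' : r > 0\}$, the space $E'$ is Banach. Its closed unit ball is $K'$ itself, which is compact in $X$ by hypothesis, so the embedding $E' \hookrightarrow X$ is continuous and compact.

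For the other side, the set inclusion $K \subset K'$ gives $r K \subset r K'$ for every $r > 0$, so $E \subset E'$, and in particular $\RKHS \subset E \subset E'$. The same inclusion $K \subset K'$ yields $p_{K'}(x) \leq p_K(x)$ for all $x \in E$, hence the natural map $(E, p_K) \to (E', p_{K'})$ is a continuous linear embedding. Compactness of $\RKHS \hookrightarrow E'$ then follows by composition: given any sequence in the unit ball of $\RKHS$, the hypothesized compact embedding $\RKHS \hookrightarrow E$ furnishes a $p_K$-Cauchy subsequence, and by $p_{K'} \leq p_K$ this subsequence is also $p_{K'}$-Cauchy and hence convergent in the complete space $E'$.

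I do not anticipate a substantive obstacle here. The conceptual point is simply that enlarging $K$ to $K'$ weakens the norm, which makes the $\RKHS$-ball even easier to relatively compactify, while the compactness of the embedding into $X$ is preserved by the hypothesis that $K'$ itself be compact (not merely bounded). The only place one needs to be mildly careful is in checking that $p_{K'}$ remains a genuine norm and that $E'$ is complete, but both follow from compactness of $K'$ by the same arguments already used to construct $E$ from $K$.
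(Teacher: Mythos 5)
Your proposal is correct and follows the same route as the paper: the embedding $E' \hookrightarrow X$ is compact because the closed unit ball $K'$ is compact by hypothesis (re-running the Section 3.1 construction), and the embedding $\RKHS \hookrightarrow E'$ is compact because $K \subset K'$ forces $p_{K'} \leq p_K$, i.e.\ the topology is only coarsened, which preserves (relative) compactness. The paper states this topology-coarsening step in one line; you spell it out with the Cauchy-subsequence argument, but the idea is identical.
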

\begin{proof}
Since $K' \supset K$, $E_{K'} \supset E_K$. In particular, on $E_K$, the topology induced by $K$ is no coarser than the subspace topology inherited from $E_{K'}$. Hence from $E_K$ to $E_{K'}$, we see the topology as being coarsened or unchanged. In either case, compactness is preserved. 
\end{proof}
Note Lemma~\ref{l:enlargement} does not imply $E_{K'}$ is distinct from $E_K$.
\begin{proposition}\label{p:full_measure_existence}
Every infinite dimensional separable \F{} space $X$ admits an intermediate space with full measure.
\end{proposition}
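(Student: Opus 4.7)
The plan is to combine the shape-function construction of Prop.~\ref{t:existence_of_K} with the enlargement Lemma~\ref{l:enlargement}, using the Radon property of $\mu$ to grow an initial compact absolutely convex unit ball $K_0$ into a larger compact absolutely convex set $K'$ whose linear span is $\mu$-full.

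First, I would invoke Prop.~\ref{t:existence_of_K} to obtain some intermediate space $E_0$ with symmetric, absolutely convex, $X$-compact closed unit ball $K_0$. Since $\mu$ is Radon on the Polish space $X$ (as noted at the start of Section~2), I would choose compact sets $L_n \subset X$ with $\mu(L_n) \geq 1 - 2^{-n}$, symmetrizing by replacing each $L_n$ with $L_n \cup (-L_n)$ if necessary. Each $L_n$ is compact and hence bounded in the Fréchet topology, so I can pick scalars $c_n > 0$ small enough that $c_n L_n \subset B^d_{2^{-n}}$. The set
\[
S := \{0\} \cup \bigcup_{n \in \mathbb{N}} c_n L_n
\]
is then compact in $X$: any sequence in $S$ either has a tail in some fixed $c_n L_n$ and so admits a convergent subsequence, or is eventually drawn from $c_{n_k} L_{n_k}$ with $n_k \to \infty$ and therefore converges to $0$ in $X$.

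I would then define $K' := \cl_X \convhull(K_0 \cup S)$. Since $K_0 \cup S$ is a symmetric compact subset of $X$, Thm.~\ref{t:convex_hull_preserves_compactness} makes $K'$ compact, and by construction $K'$ is absolutely convex and contains $K_0$. By Lemma~\ref{l:enlargement}, $K'$ generates an intermediate space $E'$ with closed unit ball $K'$. To verify $\mu(E') = 1$, observe that for every $x \in L_n$ one has $c_n x \in c_n L_n \subset K'$, whence $x \in c_n^{-1} K' \subset E'$; hence $E' \supset \bigcup_n L_n$, and $\mu(E') \geq \lim_n (1 - 2^{-n}) = 1$. The only genuinely new ingredient, beyond appealing to results established earlier, is reconciling the ``large in $\mu$-measure'' requirement with ``compact in $X$'': the sets $L_n$ typically grow, but shrinking them by $c_n \to 0$ fast enough forces $\bigcup_n c_n L_n$ to cluster only at $0$, making $S$ compact, after which Thm.~\ref{t:convex_hull_preserves_compactness} and Lemma~\ref{l:enlargement} finish the argument.
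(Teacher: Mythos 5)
Your proof is correct, but it takes a different route from the paper's at the key step. The paper's proof is much shorter: it uses inner regularity only once, to find a single compact $K'\subset B^d$ with $\mu(K')>0$ (possible because $\operatorname{supp}\mu=X$ by Thm.~\ref{t:RKHS_dense} gives $\mu(B^d)>0$), enlarges $K$ by $K'$ as in Lemma~\ref{l:enlargement}, and then upgrades ``positive measure'' to ``full measure'' by the Gaussian zero--one law for measurable affine subspaces (Thm.~\ref{t:zero_one}). You instead exhaust the measure directly: you take compacts $L_n$ with $\mu(L_n)\geq 1-2^{-n}$, shrink them by $c_n\to 0$ so that $\{0\}\cup\bigcup_n c_nL_n$ clusters only at $0$ and is therefore compact, and absorb this set into the unit ball; since $L_n\subset c_n^{-1}K'\subset E'$ and $E'=\bigcup_m mK'$ is Borel, you get $\mu(E')=1$ without any zero--one law. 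The trade-off: the paper's argument is a two-line application of a deep Gaussian fact, while yours is longer but uses only inner regularity and so would give a full-measure span of a compact absolutely convex set for \emph{any} Radon probability measure, not just a Gaussian one (the Gaussian structure is still needed elsewhere, of course, for $\RKHS$ and the compact embedding $\RKHS\hookrightarrow E$). Your construction also partially answers, in spirit, the paper's closing question about avoiding appeals beyond inner regularity is moot here --- you still use inner regularity, just not the zero--one theorem. All the individual steps (boundedness of compacts giving the scalars $c_n$, sequential compactness of $S$, Thm.~\ref{t:convex_hull_preserves_compactness}, and Lemma~\ref{l:enlargement}) check out.
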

\begin{proof}
By Thm.~\ref{t:RKHS_dense}, $\mu$ is strictly positive (supported everywhere) and $\mu(B^d) > 0$. Since $\mu$ is Radon, there exists some $K' \subset B_d$ such that $\mu(K') > 0$. Define a new $K$ to be the closed convex hull of symmetric hull of $K' \cup K$ and generate a new $E$. Since $K \subset E$, $\mu(E) \geq \mu(K) > 0$. By Thm.~\ref{t:zero_one}, $\mu(E) = 1$.
\end{proof}

It remains unclear to the authors if explicit requirements can be put onto shape functions such that $\mu(K)$ or $\mu(E)$ is positive for general $X$.

\begin{remark}[Reflexivity]
After applying Prop.~\ref{p:full_measure_existence}, the same approach used for proof of Thm.~\ref{t:Bogachev_intermediate} (citing \cite[Ch.~5~\S4~Thm.~1]{DiestelNotes}) is still valid if a reflexive space is desirable, although apparently $\phi$ loses control over the unit ball. We note that $E$ (as in Def.~\ref{d:construction}) is weakly compactly generated (by the unit ball of $\RKHS$) and \cite{DiestelNotes} contains many properties and characterizations of such spaces.
\end{remark}

\begin{proposition}
If $\RKHS$ is dense in an intermediate space $E$, then $E$ is separable.
\end{proposition}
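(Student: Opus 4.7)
The plan is to transport a countable dense subset of $\RKHS$ (from its inner product topology) into $E$ (under its norm topology), exploiting continuity of the embedding $\RKHS \hookrightarrow E$.

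First I would invoke Thm.~\ref{t:Radon_RKHS_separable} to obtain that $\RKHS$ is separable in its inner product topology; let $\{h_n\}_{n \in \mathbb{N}} \subset \RKHS$ be a countable dense subset. Since $\RKHS \hookrightarrow E$ is compact, in particular continuous, any sequence converging in $\RKHS$-norm also converges in the $E$-norm $\|\cdot\|_E$. Hence $\{h_n\}$ remains dense in $\RKHS$ when $\RKHS$ is viewed as a subset of $E$ with the subspace norm topology.

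Next, I would use the hypothesis that $\RKHS$ is dense in $E$ under $\|\cdot\|_E$. Given any $x \in E$ and $\varepsilon > 0$, there exists $h \in \RKHS$ with $\|x - h\|_E < \varepsilon/2$, and by the previous paragraph there exists $h_n$ with $\|h - h_n\|_E < \varepsilon/2$. By the triangle inequality, $\|x - h_n\|_E < \varepsilon$, so $\{h_n\}$ is a countable dense subset of $E$.

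There is no real obstacle here; the only point requiring care is to distinguish the two topologies at play — the inner product topology on $\RKHS$ (in which separability is given) versus the $E$-norm topology on $\RKHS$ (in which density into $E$ is assumed) — and to note that continuity of the embedding makes the former finer, so that the dense subset transfers correctly. This yields separability of $E$.
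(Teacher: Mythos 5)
Your proof is correct and is essentially the same argument as the paper's: both transport a countable dense subset of $\RKHS$ (separable by Thm.~\ref{t:Radon_RKHS_separable}) through the continuous embedding $\RKHS \hookrightarrow E$ and then use the assumed density of $\RKHS$ in $E$; the paper phrases the final step with covers by balls while you use the triangle inequality, but the content is identical.
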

\begin{proof}
Since $\mu$ is a Radon Gaussian measure, $\RKHS$ is separable by Thm.~\ref{t:Radon_RKHS_separable}. Since $\RKHS$ is dense in $E$, for all $r > 0$, $\RKHS + B^K_r$ covers $E$. Since $\RKHS$ is separable, there exists some countable dense subset $S \subset \RKHS$ such that $S + B^\RKHS_r$ covers $\RKHS$. Since $S + B^\RKHS_r \subset S + B^K_r$, the latter also covers $\RKHS$. We see that $(S + B^K_r) + B^K_r$ covers $E$ for all $r > 0$. Since $B^K$ is convex \cite[p.~38]{Rudinbook}, we have that $B^K_r + B^K_r = B^K_{2 r}$. Given that the initial choice of $r$ is arbitrary, we conclude that $S$ is a countable dense subset of $E$ as well.
\end{proof}

Finally, we show a converse to Prop.~\ref{t:existence_of_K}.
\begin{proposition}\label{p:converse}
Given any (complete) normed intermediate space $E$ where the norm is lower semicontinuous with respect to the topology on $X$, there exists a shape function $\phi$ that generates it as described in Def.~\ref{d:construction}. (Completeness is redundant by Thm.~\ref{t:completeness}.)
\end{proposition}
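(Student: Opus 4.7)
The plan is to take $\phi(x) := x / \|x\|_E$ for $x \in S^\RKHS$, the radial projection of the $\RKHS$-sphere onto the $E$-sphere. Denote $K_E := \{y \in E : \|y\|_E \leq 1\}$; this is $X$-compact because it is $X$-closed by lower semicontinuity of $\|\cdot\|_E$ and $X$-relatively compact by the compact embedding $E \hookrightarrow X$. After rescaling $\|\cdot\|_E$ so that $\|x\|_E \leq 1$ for all $x \in S^\RKHS$ (possible because $\RKHS \hookrightarrow E$ is continuous), one has $|\phi|(x) = 1/\|x\|_E \geq 1$ and $\phi(x) \in K_E$, so the codomain conditions for $\phi$ are met.

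I then verify properties (a)--(d) of Def.~\ref{d:phi}. Homogeneity and symmetry (a) are immediate from norm homogeneity. For (b) I take $T := K_E$; since $\phi(S^\RKHS) \subset T$, the inclusion $\phi^{-1}(U) \supset S^\RKHS$ holds for every neighborhood $U$ of $T$, and (b) is satisfied trivially with $V_U = X$. For (d), if $x_n \in S^\RKHS$ and $x_n \to 0$ in $X$, the compact embedding $\RKHS \hookrightarrow E$ makes $\{x_n\}$ $E$-relatively compact; any $E$-accumulation point vanishes because $E \hookrightarrow X$ is continuous and $X$ is Hausdorff, so $\|x_n\|_E \to 0$ and $|\phi|(x_n) \to \infty$. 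Property (c) is the contrapositive, bounding $\|x\|_E$ from below on $S^\RKHS \setminus B^d_\epsilon$.

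To conclude that the generated space is $E$, I would verify $K := \cl_X \convhull \phi(S^\RKHS) = K_E$. The forward inclusion is immediate from $\phi(S^\RKHS) \subset K_E$, convexity of $K_E$, and its $X$-closedness. For the reverse I use that $\RKHS$ is dense in $E$ (implicit in the hypothesis, since by Def.~\ref{d:construction} every generated space has $\RKHS$ dense): for $y \in K_E$ pick $y_n \in \RKHS$ with $\|y_n\|_E \leq 1$ and $y_n \to y$ in $\|\cdot\|_E$. Using $0 \in \convhull \phi(S^\RKHS)$ (from the symmetry $\phi(-x) = -\phi(x)$), the convex decomposition $y_n = \|y_n\|_E\, \phi(y_n/\|y_n\|_\RKHS) + (1 - \|y_n\|_E) \cdot 0$ places $y_n \in \convhull \phi(S^\RKHS)$; then $y_n \to y$ in $E$-norm gives $y_n \to y$ in $X$, so $y \in K$. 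Hence $K = K_E$, the Minkowski functional $p_K$ coincides with $\|\cdot\|_E$, and the generated space $\cl_E(\RKHS)$ equals $E$.

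The most delicate technical point I anticipate is the codomain constraint $\phi(x) \in \cl_X(B^d)$: for a standard metric $d$ bounded globally by $1$ (as in the construction $d = \sum 2^{-n} p_n/(1+p_n)$), $\cl_X(B^d) = X$ makes the constraint vacuous, but for other metric normalizations one may need further rescaling of $\|\cdot\|_E$ or to replace the definition by $k(x) = \max(1, 1/\|x\|_E)$, which then requires tracking the convex hull more carefully along directions where $\|x\|_E > 1$.
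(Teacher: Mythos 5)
Your proposal is correct and follows essentially the same route as the paper: the same shape function $\phi(x)=x/\|x\|_E$, the same choice $T=\cl_X B^K$, and the same relative-compactness/Hausdorff-limit argument for property (d). The only difference is that you verify $K=\cl_X \convhull(\phi(S^\RKHS))$ equals the closed unit ball of $E$ in more detail, making explicit the (implicitly needed) density of $\RKHS$ in $E$ and the codomain normalizations that the paper's proof leaves tacit.
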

\begin{proof}
Define $|\phi| : f \mapsto \|f\|^{-1}$ and $\phi : f \mapsto |\phi|(f) f$ (then homogeneously extended). By slight abuse of notation, we will continue to use $B^K$ to denote the open unit ball of the given $E$.

Def.~\ref{d:phi}a is satisfied by construction. Choose $T = \cl_X B^K$, which is compact since $E \hookrightarrow X$ compactly. Def.~\ref{d:phi}b is then satisfied because $T$ contains the image of $\phi$. Def.~\ref{d:phi}c is true because $|\phi|(f) = \|f\|^{-1} \leq d(0, f)^{-1}$ up to a constant ($B^K$ is totally bounded in $X$); the latter meets the requirement.

(Def.~\ref{d:phi}d.) Let $\{x_i\}_{i \in \mathbb{N}} \in S^\RKHS$ be a sequence that converges to 0 in metric. We claim that it also converges to 0 in norm. To see this, assume not. Then there exists some $r$ such that $x_i \notin B^K_r$ for infinitely many $i$; such $x_i$ form a sequence $\{x_i\}_{i \in I}$ ($I \subset \mathbb{N}$). Since $S^\RKHS$ is relatively compact in $E$, $\{x_i\}_{i \in I} \in S^\RKHS$ has a convergent subsequence in $E$. But since both the metric topology and norm topology are Hausdorff, $\{x_i\}_{i \in \mathbb{N}}$ and $\{x_i\}_{i \in I}$ shall have the same limit. This shows that every sequence taking values in $S^\RKHS$ converges to $0$ in norm if (and only if) in metric. By homogeneity of norm, $\lim_{x \to 0} |\phi|(x) \geq \lim_{i \to \infty} \|x_i\|^{-1} = \lim_{x \to 0} \|x\|^{-1} = \infty$.


Since $\cl_E B^K = \cl_X B^K$ is itself convex, the norm closed unit ball coincides with its closed convex hull in $X$. Therefore, the generated space of $\phi$ is $E$.
\end{proof}

\section{Example: H\"older spaces and the Wiener space}

In this section, we demonstrate the classical case of an intermediate space that was first noted in \cite{Baldi}.

Recall that  $\mathcal C_0([0,T],\mathbb R)$ is the space of all real valued continuous functions thereon whose initial values are 0 (i.e.\ $f(0) = 0$ for all $f \in \mathcal C_0$). Let $\mathcal C_0$ have the $\sup$-norm, which is equivalent to $\sup_{a, b \in [0, 1]} |f(a) - f(b)|$. Let $W_0^{1, 2}$ denote the space of all absolutely continuous real valued functions whose weak derivative is square integrable and initial value is 0. Let $\mathcal C_0^{0, \alpha}$ denote space of $\alpha$-H\"older functions with initial value 0. It is well known that $W_0^{1, 2} \hookrightarrow \mathcal C_0^{0, \alpha} \hookrightarrow \mathcal C_0$ compactly for $\alpha \in (0, \frac{1}{2})$. We show that there is a corresponding $\phi$ that generates a subspace of the H\"older spaces $\mathcal C_0^{0, \alpha}$ satisfying the four properties required in Def.~\ref{d:phi} without resorting to Proposition \ref{p:converse}.

Denote $\|f\|_\alpha = \sup_{a, b \in [0, 1], a \neq b} \frac{f(a) - f(b)}{|a - b|^\alpha}$, which is a (separating) norm on $C_0^{0, \alpha}$. Define $|\phi| : f \mapsto \|f\|_\alpha^{-1}$ ($0 < \alpha < \frac{1}{2}$) to be the reciprocal of H\"older constant and $\phi : f \mapsto |\phi|(f) f$. (Both defined on $S^{W_0^{1, 2}}$.) By construction, $\phi$ satisfies Def.~\ref{d:phi}a. Def.~\ref{d:phi}b is satisfied by simply choosing $T$ to be the closed unit ball of $C_0^{0, \alpha}$. We note that for small H\"older space $C_s^\alpha$, we may choose $T = \{0\}$. The other two properties are given below.

\begin{proposition}[Property c]
If $\sup |f| > \epsilon > 0$, then there exists some $M$ such that $|\phi|(f) < M$.
\end{proposition}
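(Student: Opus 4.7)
The plan is to use the anchoring condition $f(0) = 0$ built into $\mathcal{C}_0$ (and inherited by $W_0^{1,2}$) to convert a sup-norm lower bound into a H\"older-norm lower bound. Concretely, if $\sup_{t \in [0,1]} |f(t)| > \epsilon$, I would pick a point $t^* \in [0,1]$ with $|f(t^*)| > \epsilon$ and apply the definition of $\|\cdot\|_\alpha$ with the pair $(0, t^*)$.

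Because $t^* \in [0,1]$ we have $|t^*|^\alpha \leq 1$, so
\[
\|f\|_\alpha \;\geq\; \frac{|f(t^*) - f(0)|}{|t^* - 0|^\alpha} \;=\; \frac{|f(t^*)|}{|t^*|^\alpha} \;>\; \epsilon.
\]
Taking reciprocals gives $|\phi|(f) = \|f\|_\alpha^{-1} < \epsilon^{-1}$, so $M := \epsilon^{-1}$ works uniformly over all $f \in S^{W_0^{1,2}}$ with $\sup|f| > \epsilon$. Note that the hypothesis $f \in S^{W_0^{1,2}}$ is not even used in the estimate; only $f \in \mathcal{C}_0$ (via $f(0) = 0$) and the assumed sup-norm bound are needed.

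There is no real obstacle here --- the compactness of $[0,1]$, combined with the fixed basepoint $f(0) = 0$, makes the bound essentially one line. The conceptual content is simply that on $\mathcal{C}_0$ the H\"older seminorm already dominates a suitable evaluation functional, which in turn dominates $\sup|f|$ up to constants on $[0,1]$. This is exactly the shape of estimate that Def.~\ref{d:phi}c asks for, and it is what guarantees $\phi(S^{W_0^{1,2}} \setminus B^d_\epsilon)$ lies in a bounded subset of $W_0^{1,2}$ (after one further verifies $W_0^{1,2} \hookrightarrow \mathcal{C}_0^{0,\alpha}$ is bounded, so that control of $\|f\|_\alpha$ translates to control of $\|\phi(f)\|_{W_0^{1,2}}$).
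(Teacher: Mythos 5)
Your proof is correct and is essentially the paper's argument: both exploit that $|a-b|^\alpha \le 1$ on $[0,1]$ together with the anchoring $f(0)=0$ to get $\|f\|_\alpha \ge \sup|f| > \epsilon$, hence $M = \epsilon^{-1}$. (Your closing aside about needing the embedding $W_0^{1,2}\hookrightarrow \mathcal C_0^{0,\alpha}$ is superfluous: since $\|f\|_{W^{1,2}}=1$ on $S^{W_0^{1,2}}$, the bound $|\phi|(f)<M$ already gives $\|\phi(f)\|_{W^{1,2}}<M$ directly.)
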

\begin{proof}
\begin{equation}
|\phi|(f)^{-1} = \sup_{a, b \in [0, 1], a \neq b} \frac{f(a) - f(b)}{|a - b|^\alpha} \geq \frac{\sup |f|}{1^\alpha} = \sup |f| > \epsilon.
\end{equation}
Choose $M = \epsilon^{-1}$.
\end{proof}

\begin{proposition}[Property d]
For any $M > 0$, there exists some $\epsilon > 0$ such that for all $f \in W_0^{1, 2}$, if $\int f'(x)^2\,{\rm d}x = 1$ and $\sup |f| < \epsilon$, then $|\phi|(f) > M$.
\end{proposition}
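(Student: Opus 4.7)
The plan is to use the classical interpolation between the $1/2$-Hölder norm (coming from the $L^2$ bound on the weak derivative) and the sup norm to bound the $\alpha$-Hölder seminorm, where $\alpha < 1/2$ leaves us headroom to interpolate. First I would observe that for any $f \in W_0^{1,2}$ with $\int f'(x)^2\,dx = 1$, the Cauchy--Schwarz inequality gives
\[|f(a) - f(b)| = \left|\int_a^b f'(t)\,dt\right| \leq |a-b|^{1/2}\,\|f'\|_{L^2} = |a-b|^{1/2},\]
so $f$ is automatically $\tfrac{1}{2}$-Hölder with constant at most $1$.

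Next I would split the supremum defining $\|f\|_\alpha$ according to the size of $|a-b|$ relative to a threshold $\delta \in (0,1]$ to be chosen. For the ``short'' pairs $|a-b| \leq \delta$, the derivative bound above yields
\[\frac{|f(a)-f(b)|}{|a-b|^\alpha} \leq |a-b|^{1/2-\alpha} \leq \delta^{1/2-\alpha},\]
using that $1/2 - \alpha > 0$. For the ``long'' pairs $|a-b| > \delta$, the crude triangle inequality gives
\[\frac{|f(a)-f(b)|}{|a-b|^\alpha} \leq \frac{2\sup|f|}{\delta^\alpha} < \frac{2\epsilon}{\delta^\alpha}.\]
Therefore $\|f\|_\alpha \leq \max\bigl(\delta^{1/2-\alpha},\, 2\epsilon\,\delta^{-\alpha}\bigr)$.

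Finally I would optimize the choice of parameters. Given $M > 0$, I choose $\delta \in (0,1]$ so that $\delta^{1/2-\alpha} < 1/M$, i.e.\ $\delta < M^{-1/(1/2-\alpha)}$, and then choose $\epsilon$ so small that $2\epsilon\,\delta^{-\alpha} < 1/M$, i.e.\ $\epsilon < \delta^\alpha/(2M)$. Both terms in the max are then below $1/M$, hence $\|f\|_\alpha < 1/M$ and $|\phi|(f) = \|f\|_\alpha^{-1} > M$, as required. There is no real obstacle here — the argument is a standard interpolation, and the only mild subtlety is making sure the threshold $\delta$ stays within $[0,1]$ so that the inequality $|a-b|^{1/2-\alpha} \leq \delta^{1/2-\alpha}$ goes in the right direction, which is automatic on $[0,1]$ since $1/2 - \alpha > 0$.
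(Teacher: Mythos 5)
Your proof is correct and follows essentially the same route as the paper: both use Cauchy--Schwarz to get the $\tfrac12$-H\"older bound $|f(a)-f(b)|\leq|a-b|^{1/2}$ and then interpolate it against the sup-norm bound $2\epsilon$ to make the $\alpha$-H\"older constant small. Your explicit two-case split at the threshold $\delta$ is just an unrolled version of the paper's one-line estimate $\min\{\sqrt{|a-b|},2\epsilon\}\leq(2\epsilon)^{1-2\alpha}|a-b|^{\alpha}$ (which corresponds to taking $\delta=(2\epsilon)^2$), so the two arguments are the same in substance.
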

\begin{proof}
By Cauchy-Schwartz inequality and the fundamental theorem of calculus, for any $a, b$,
\begin{equation}
f(a) - f(b) = \int_a^b 1 \cdot f'(x)\,{\rm d}x \leq \sqrt{\int_a^b 1^2\,{\rm d}x} \sqrt{\int_a^b f'(x)^2\,{\rm d}x} \leq \sqrt{b - a}.
\end{equation}
Hence for any $a, b$, $|f(a) - f(b)| \leq \min\{\sqrt{b - a}, 2 \epsilon\} \leq (2 \epsilon)^{1 - 2 \alpha} |a - b|^\alpha$ (the latter is obtained by solving $M (2 \epsilon)^{2 \alpha} = 2 \epsilon$ for $M$). Since $\alpha \in (0, \frac{1}{2})$, $\lim_{\epsilon \to 0} (2 \epsilon)^{1 - 2 \alpha} = 0$. Therefore, appropriate $\epsilon$ can always be found to make the H\"older constant sufficiently small and thereby $|\phi|$ sufficiently large.
\end{proof}

Now we carry out the procedure of Def.~\ref{d:construction} and find $K$. $K = \cl_{\mathcal{C}_0} \convhull(\phi(S^{W_0^{1, 2}}))$ and $\phi(S^{W_0^{1, 2}}) = W_0^{1, 2} \cap B^{\mathcal{C}_0^{0, \alpha}}$; clearly $K \subseteq \cl_{\mathcal{C}_0} B^{\mathcal{C}_0^{0, \alpha}}$. Since the H\"older norm is lower semicontinuous in $\sup$-norm, $\cl_{\mathcal{C}_0^{0, \alpha}} B^{\mathcal{C}_0^{0, \alpha}} = \cl_{\mathcal{C}_0} B^{\mathcal{C}_0^{0, \alpha}}$; this shows \underline{$K$ is contained in the $\alpha$-H\"older closed unit ball}. Since $W_0^{1, 2}$ is a linear space and $B^{\mathcal{C}_0^{0, \alpha}}$ is convex, $K = \cl_{\mathcal{C}_0} (W_0^{1, 2} \cap B^{\mathcal{C}_0^{0, \alpha}})$. Consider the closure of $W_0^{1, 2}$ in $\alpha$-H\"older space, that is, the small $\alpha$-H\"older space $\mathcal{C}_s^\alpha$. Since $W_0^{1, 2}$ is dense in $C_s^\alpha$ and $B^{\mathcal{C}_0^{0, \alpha}}$ being the open unit ball has nonempty interior, $K \supseteq \cl_{\mathcal{C}_0^{0, \alpha}} (W_0^{1, 2} \cap B^{\mathcal{C}_0^{0, \alpha}}) \supseteq C_s^\alpha \cap B^{\mathcal{C}_0^{0, \alpha}}$. That is, \underline{$K$ contains the open unit ball of $C_s^\alpha$}. We conclude that $E$ is set-theoretically bounded between $C_0^{0, \alpha}$ and $C_s^\alpha$, with a norm equivalent to the $\alpha$-H\"older norm. In particular, $\mu(E) \geq \mu(C_s^\alpha) = 1$.

\section{Conclusions}
In this article, we showed that any centered Gaussian measure on a separable Fr\'echet space has a full measure Banach intermediate space. We conclude with a question. 
\begin{question}
Can $\phi$ be chosen so that the generated space has full measure for non-normable spaces, without appealing to inner regularity? 
\end{question}
\begin{question}
Is there a Banach intermediate space $E$ of some \F{} space $X$ where the norm is not lower semicontinuous in $X$, or equivalently, where the closed unit ball of $E$ is not closed in $X$?
\end{question}

\bibliographystyle{plain}
\bibliography{bibliography}

\end{document}